\theoremstyle{plain}
\newtheorem{theorem}{Theorem}[section]
\newtheorem{lemma}[theorem]{Lemma}
\newtheorem{corollary}[theorem]{Corollary}
\newtheorem{proposition}[theorem]{Proposition}
\newtheorem{conjecture}[theorem]{Conjecture}
\theoremstyle{definition}
\newtheorem{definition}[theorem]{Definition}
\newtheorem{example}[theorem]{\sc Example}
\newtheorem{remark}[theorem]{Remark}
\newcommand{\ds}{\displaystyle}
\newcommand{\col}{\!:\!}
\DeclareMathOperator{\ad}{ad}
\DeclareMathOperator{\Id}{Id}
\DeclareMathOperator{\Aut}{Aut}
\DeclareMathOperator{\End}{End}
\DeclareMathOperator{\Hom}{Hom}
\DeclareMathOperator{\Lie}{Lie}
\DeclareMathOperator{\Der}{Der}
\DeclareMathOperator{\iDer}{iDer}
\DeclareMathOperator{\oDer}{oDer}
\DeclareMathOperator{\aDer}{aDer}
\newcommand{\aut}[1]{\Aut(#1)}
\newcommand{\aaut}{\operatorname{Aut^a}}
\newcommand{\der}[1]{\Der(#1)}
\newcommand{\dero}[1]{\Der^\circ(#1)}
\newcommand{\ider}[1]{\iDer(#1)}
\newcommand{\oder}[1]{\oDer(#1)}
\newcommand{\ader}[1]{\aDer(#1)}
\newcommand{\idero}[1]{\iDer^\circ(#1)}
\newcommand{\adero}[1]{\aDer^\circ(#1)}
\newcommand{\ai}{almost inner\ }
\newcommand{\zer}{\mathbf{0}}
\newcommand{\one}{\mathbf{1}}
\newcommand{\Z}{\mathbb{Z}}
\newcommand{\R}{\mathbb{R}}
\newcommand{\C}{\mathbb{C}}
\newcommand{\F}{\mathbb{F}}
\newcommand{\Zt}{\Z_2}
\newcommand{\cE}{\mathcal{E}}
\newcommand{\cO}{\mathcal{O}}
\newcommand{\g}{\mathfrak{g}}
\newcommand{\gl}{\mathfrak{gl}}
\renewcommand{\sl}{\mathfrak{sl}}
\newcommand{\psl}{\mathfrak{psl}}
\newcommand{\so}{\mathfrak{so}}
\renewcommand{\sp}{\mathfrak{sp}}
\newcommand{\osp}{\mathfrak{osp}}
\newcommand{\psq}{\mathfrak{psq}}
\newcommand{\spe}{\mathfrak{spe}}
\title{Almost inner derivations of Lie superalgebras}
\author{Vera Serganova%
  \thanks{Department of Mathematics, University of California, Berkeley,
  \texttt{\href{mailto:serganov@berkeley.edu}{serganov@berkeley.edu}}
   }
\quad  and \quad
Arkady Vaintrob%
\thanks{Department of Mathematics, University of Oregon, Eugene, \ \texttt{\href{mailto:vaintrob@uoregon.edu} {vaintrob@uoregon.edu}}}
}
\date{}
\begin{document}

\maketitle
\begin{abstract}
An \ai derivation of a Lie algebra $L$ is a derivation that coincides with an inner derivation on each one-dimensional subspace of $L$. The almost inner derivations form a subalgebra $\ader{L}$ of the Lie algebra $\der{L}$ of all derivations of $L$, containing the inner derivations $\ider{L}$ as an ideal. If $L$ is a simple finite-dimensional Lie algebra, then $\ader{L}=\ider{L}$, since all derivations of $L$  are inner.

In this paper, we introduce and study \ai derivations of Lie superalgebras. Since simple Lie superalgebras may admit non-inner outer derivations, the existence of non-inner \ai derivations becomes a nontrivial question. Nevertheless, we show that all \ai derivations of finite-dimensional simple Lie superalgebras over $\mathbb C$ are inner.
We also give examples of naturally occurring non-inner \ai derivations of some pseudo-reductive Lie superalgebras related to the Sato-Kimura classification of prehomogeneous vector spaces.
\end{abstract}

\section{Introduction}
Let $L$ be a Lie algebra. A derivation $D$ of $L$ is called \emph{\ai}if it coincides with an inner  derivation on each one-dimensional subspace of $L$, that is, for each $x\in L$, there exists $a\in L$ such that $D(x)=[a,x]$.
If $G$ is a Lie group with $\Lie(G)=L$, then such a derivation $D$ generates a one-parameter subgroup of automorphisms of $G$ that sends every element $g\in G$ to its conjugate. Such automorphisms are called \emph{\ai}or \emph{class-preserving}.
Ever since Burnside's discovery~\cite{Bu} of the existence of non-inner class-preserving automorphisms, such automorphisms have attracted attention of finite group theorists (see~\cite{Ku} or~\cite{Ya} for a recent review).

Almost inner derivations first appeared as a tool for producing non-inner class-preserving automorphisms in the work of Gordon and Wilson~\cite{GW} on isospectral deformations of Riemannian manifolds. (Later and independently, they were introduced in~\cite{SSB,AS} under the name  \emph{pointwise inner derivations}.)
Gordon and Wilson proved that if  $\phi:G\to G$ is an \ai automorphism of a solvable Lie group $G$ with a right-invariant metric $g$, then for any cocompact discrete subgroup  $\Gamma\subset G$, the Riemannian manifolds $X=G/\Gamma$ and $X'=G/\phi(\Gamma)$ are isospectral, that is, the Laplace operators on $\Omega^p(X)$ and $\Omega^p(X')$ have the same spectrum for all $p\ge 0$. If $\phi$ is inner, then $X$ and $X'$ are also isometric, but otherwise they may not be. This observation led Gordon and Wilson to the discovery of the first known example of a continuous family of isospectral, pairwise non-isometric Riemannian manifolds.

Since then, \ai automorphisms and derivations, along with related concepts, such as almost conjugate subgroups and homomorphisms, have appeared in various areas of algebra and its applications, especially where the interplay between local and global conjugacy plays a role (see e.g.\ \cite{FHS,Go,KO,La,O,Wa,We}).

A systematic algebraic study of \ai derivations of Lie algebras was initiated in~\cite{BDV} (see also~\cite{AS}). For finite-dimensional simple Lie algebras, all derivations are inner and therefore they have no nontrivial \ai derivations. Thus it is not surprising that all known examples of \ai derivations involve nilpotent or solvable Lie algebras (see \cite{BDV2,Ve,DG}). On the other hand, many simple Lie superalgebras have outer derivations, which raises the question of whether they may have nontrivial \ai derivations.

In this note, we show that, nevertheless, all \ai derivations of finite-dimensional simple Lie superalgebras over the field of complex numbers are inner.
However, we also found examples of non-inner \ai derivations for some non-simple Lie superalgebras that are far from being solvable. In particular, such derivations exist for several families of \emph{quasireductive} Lie superalgebras (i.e.\ superalgebras $L=L_\zer\oplus L_\one$, where $L_\zer$ is a reductive Lie algebra and $L_\one$ is a semisimple $L_\zer$-module, see~\cite{Se}). Quite unexpectedly, these examples turned out to be related with prehomogeneous vector spaces~\cite{Vi,SK}.

Let us describe the contents of the paper in more detail. We begin in Section~\ref{sec:almost} with the definition and general properties of \ai derivations for Lie algebras.
Also, we prove there that for a finite-dimensional Lie algebra $L$ over a field of characteristic zero, the \ai derivations form an ideal in the Lie algebra $\der{L}$ of all derivations. This fact was previously an open conjecture~\cite{BDV}, verified only in some special cases such as nilpotent Lie algebras~\cite{KO}.

Next, we define \ai derivations of Lie superalgebras as derivations that coincide with inner derivations on elementary subsuperalgebras. We also give several natural examples of Lie superalgebras with nontrivial \ai derivations, both even and odd, and show how such examples can be constructed from certain prehomogeneous vector spaces for semisimple Lie algebras.

In Section~\ref{sec:classification}, we review the classification and structure of simple Lie superalgebras given by V.\,Kac~\cite{Ka} and provide a complete description of their superalgebras of derivations.
In Section~\ref{sec:main}, we prove our main result (Theorem~\ref{thm:main}) that all \ai derivations of simple Lie superalgebras over $\C$ are inner.
In Section~\ref{sec:quasi}, we study \ai derivations of quasireductive Lie superalgebras.
Finally, in Section~\ref{sec:final}, we present several additional examples and discuss directions for future work.

\

We thank Boris Kunyavskii, who first told us about almost inner derivations and raised the question of their existence for simple Lie superalgebras.

\section{Almost inner derivations}
\label{sec:almost}

\subsection{Preliminaries}
We work with vector spaces, linear maps, algebras, etc., defined over a base field $\F$ of characteristic not equal to $2$.
A vector \emph{superspace} is the same as a $\Zt$-graded vector space
$$V = V_\zer\oplus V_\one.$$
Elements of $V_\zer$ are called \emph{even} and elements of $V_\one$ are called \emph{odd}. For $v\in V_\alpha$, we call $\alpha \in \Zt=\{\one,\zer\}$ the \emph{degree} (or \emph{parity}) of $v$ and denote it by $|v|$.

We say that a linear map
$$f:V\to W$$ between vector superspaces
$V=V_\zer\oplus V_\one$ and $W=W_\zer\oplus W_\one$ has degree $\delta\in \Zt$ if $f(V_\alpha)\subset W_{\alpha+\delta}$ for $\alpha\in \Zt$. This introduces a $\Zt$-grading on the space of $\F$-linear maps
$$
\Hom(V,W)=\Hom(V,W)_\zer\oplus \Hom(V,W)_\one.
$$
In dealing with superspaces, formulas are often written only for homogeneous elements and then extended to inhomogeneous ones by linearity.
In all operations involving parity, such as tensor products, commutators, and permutations of variables, we follow the standard \emph{sign rule}:
\begin{quote}
  whenever two homogeneous elements $u$ and $v$ are interchanged,  a sign factor $(-1)^{|u|\cdot |v|}$ is introduced.
\end{quote}
For example, the \emph{commutator} of two endomorphisms
$$f,g\in \End_\F(V)=\Hom_\F(V,V)$$
of a superspace $V$ is defined as
\begin{equation}
  \label{eq:commutator}
  [f,g]=fg - (-1)^{|f| \cdot |g|}gf.
\end{equation}

\subsection{Almost inner derivations of Lie algebras}
Recall that a derivation of a Lie algebra $L$ over a field $\F$ is a linear map
$$ D:L\to L $$
satisfying the Leibniz identity
\[
  D([x,y])=[D(x),y]+[x,D(y)], \quad  \text{for all } x,y\in L.
\]
It is easy to check that the set $\der{L}$ of all derivations of $L$ is a Lie subalgebra of the Lie algebra $\End_\F(L)$.

The Jacobi identity for $L$ can be restated as the condition that for all $a\in L$, the map
\[
  \ad_a: L\to L, \quad ad_a(x)=[a,x]
\]
is a derivation. The map
$$\ad: L \to \der{L}$$
is a Lie algebra homomorphism whose kernel
is the center $Z(L)$ of $L$.

Derivations of the type $\ad_a$ are called \emph{inner}, and it is easy to see that the set
\[
  \ider{L}=\{\ad_a \col a\in L\}\subset \der{L}
\]
  of all inner derivations is an ideal in $\der{L}$.

  \

 The quotient Lie algebra
 \begin{equation}
   \label{eq:outer}
\oder{L}=\der{L}/\ider{L}
\end{equation}
is called the algebra of \emph{outer derivations} of $L$.
By definition, it coincides with the first cohomology of $L$ with the coefficients in the adjoint module

\begin{equation}
  \label{eq:cohom}
  \oder{L}=H^1(L,L).
\end{equation}
The well-known fact that, if $L$ is a finite-dimensional semisimple Lie algebra and $M$ is an non-trivial irreducible finite-dimensional $L$-module, then
\begin{equation}
  \label{eq:vanish}
H^i(L,M)=0, \text{ for all } i\ge 0,
\end{equation}
implies that all derivations of a semisimple Lie algebra are inner.

\begin{definition}
\label{def:almost} Let $L$ be a Lie algebra.
A derivation $D\in \der{L}$ is called \emph{\ai}if
on every one-dimensional subspace of $L$, it coincides with an inner derivation.
In other words, $D$ is \ai if for every $x\in L$ there exists $a\in L$ such that
\begin{equation}
  \label{eq:ai}
 D(x)=[a,x],
\end{equation}
or equivalently, if
\begin{equation}
  \label{eq:ai2}
D(x)\in [L,x] \text{ \ for all } x\in L.
\end{equation}
We denote by
$$
\ader{L} \subset \der{L}
$$
the subspace of all \ai derivations of $L$.
\end{definition}

\begin{proposition}
  \label{prop:subalg}
The space $\ader{L}$  is a Lie subalgebra of $\der{L}$.
\end{proposition}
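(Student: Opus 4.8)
The plan is to verify the two defining properties of a Lie subalgebra of $\der{L}$: closure under linear combinations and closure under the bracket. The first is immediate and uses the same idea that will drive the whole argument. If $D_1,D_2\in\ader{L}$ and $\lambda,\mu\in\F$, then for each $x\in L$ I can choose $a,b\in L$ with $D_1(x)=[a,x]$ and $D_2(x)=[b,x]$, so that $(\lambda D_1+\mu D_2)(x)=[\lambda a+\mu b,\,x]\in[L,x]$. Hence $\ader{L}$ is a subspace, and the real content of the proposition is to show that $[D_1,D_2]=D_1D_2-D_2D_1$ again satisfies \eqref{eq:ai2}.

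To this end I would fix an arbitrary $x\in L$ and pick $a,b\in L$ (both depending on $x$) with $D_1(x)=[a,x]$ and $D_2(x)=[b,x]$. Expanding $[D_1,D_2](x)=D_1([b,x])-D_2([a,x])$ and applying the Leibniz identity to each term gives
\[
[D_1,D_2](x)=\bigl[\,D_1(b)-D_2(a),\ x\,\bigr]+[b,[a,x]]-[a,[b,x]].
\]
The two iterated brackets are then recombined using the Jacobi identity in the form $[a,[b,x]]-[b,[a,x]]=[[a,b],x]$, which collapses them into a single inner term and yields
\[
[D_1,D_2](x)=\bigl[\,D_1(b)-D_2(a)-[a,b],\ x\,\bigr]\ \in\ [L,x].
\]
Since $x$ was arbitrary, this exhibits, for each $x$, an explicit element $c=D_1(b)-D_2(a)-[a,b]$ with $[D_1,D_2](x)=[c,x]$, proving that $[D_1,D_2]\in\ader{L}$.

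The computation is short, so there is no serious technical obstacle; the subtlety is conceptual rather than computational. The elements $a$ and $b$ are chosen separately for each $x$, so one cannot expect a single element realizing $[D_1,D_2]$ as an inner derivation — and indeed one should not, since $[D_1,D_2]$ need not be inner. What the argument produces is precisely what Definition~\ref{def:almost} requires: an $x$-dependent witness $c$. The one place to stay vigilant is the Jacobi step, where the order of $a$ and $b$ and the resulting sign must be tracked carefully; the reason the double brackets combine so cleanly there is exactly that each $\ad_a$ is itself a derivation.
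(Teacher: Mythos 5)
Your proof is correct and follows essentially the same route as the paper's: expand $[D_1,D_2](x)$ via the Leibniz rule using $x$-dependent witnesses, then collapse the two iterated brackets with the Jacobi identity to exhibit the explicit element $D_1(b)-D_2(a)-[a,b]$ (the paper's $D'(a)-D(a')+[a,a']$ in its notation). The only difference is that you also spell out closure under linear combinations, which the paper leaves implicit.
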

\begin{proof}
  Let $D,D'\in \ader{L}$. Then, for $x\in L$, we have $D(x)=[a,x]$
  and  $D'(x)=[a',x]$ for some $a,a'\in L$. Therefore
\[
 \begin{split}[D',D](x)=&D'(D(x))-D(D'(x))=D'([a,x])-D([a',x])\\
  =&[D'(a),x]+[a,D'(x)]-[D(a'),x]-[a',D(x)]\\
  =&[D'(a),x]+[a,[a',x]]-[D(a'),x]-[a',[a,x])]\\
  =&[D'(a),x]-[D(a'),x]+[[a,a'],x]\\
  =&[D'(a)-D(a')+[a,a'],x].
 \end{split}
\]
Thus $[D,D']\in \ader{L}$.
\end{proof}

\begin{remark}
  If $L=\mathrm{Lie}(G)$,  the Lie algebra of a simply connected Lie group $G$, then $\der{L}$ can be naturally identified with the Lie algebra of the Lie group $\aut{G}$ of automorphisms of $G$. Under this identification, $\ider{L}$ becomes the Lie algebra of the group of inner automorphism of $G$, and one can expects  $\ader{L}$ to be the Lie algebra of the subgroup  consisting of \emph{\ai}(also known as  \emph{class preserving}) automorphisms of $G$, i.e.\ automorphisms $\gamma \in \aut{G}$  such that for every $g\in G$ we have $\gamma(g) = h^{-1}gh$ for some $h\in G$ or, equivalently, of the subgroup of automorphisms preserving each adjoint orbit in $L$. The only sticky point is to show that \ai automorphisms is a Lie group.
\end{remark}

In fact, \ai derivations form an ideal in the Lie algebra of all derivations.
First, let us check that \ai derivations are preserved by automorphisms.

\begin{lemma} \label{lem:autom}
  Let $D\in \ader{L}$ be an \ai  derivation and let $f\in \aut{L}$ be an automorphism of $L$.
  Then
  $$f_*(D):=f\circ D\circ f^{-1}$$ is also an \ai derivation.
\end{lemma}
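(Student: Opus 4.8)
The plan is to verify directly from the definition that $f_*(D) = f \circ D \circ f^{-1}$ satisfies the defining condition~\eqref{eq:ai} of an \ai derivation. There are really two things to check: first, that $f_*(D)$ is a derivation at all, and second, that it is almost inner. The first is a standard fact and should be dispatched quickly; the second is where the content lies, though it too is short.

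First I would confirm that $f_*(D)$ is a derivation. For $x, y \in L$, writing $u = f^{-1}(x)$ and $v = f^{-1}(y)$ and using that $f$ (hence $f^{-1}$) is an algebra automorphism, one has $f^{-1}([x,y]) = [u,v]$, so $f_*(D)([x,y]) = f(D[u,v]) = f([Du,v] + [u,Dv])$ by the Leibniz rule for $D$, and applying $f$ to the bracket gives $[f(Du), f(v)] + [f(u), f(Dv)] = [f_*(D)(x), y] + [x, f_*(D)(y)]$. Thus $f_*(D) \in \der{L}$.

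For the almost inner property, fix an arbitrary $x \in L$. Since $D$ is almost inner, applying~\eqref{eq:ai} to the element $f^{-1}(x)$ produces some $a \in L$ with $D(f^{-1}(x)) = [a, f^{-1}(x)]$. Now apply $f$ to both sides: the left-hand side becomes $f_*(D)(x)$, while the right-hand side, using again that $f$ preserves the bracket, becomes $f([a, f^{-1}(x)]) = [f(a), x]$. Setting $b := f(a) \in L$, we obtain $f_*(D)(x) = [b, x]$, which is exactly the condition~\eqref{eq:ai} for $f_*(D)$ at the point $x$. Since $x$ was arbitrary, $f_*(D) \in \ader{L}$.

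I do not expect any serious obstacle here: the entire argument is a direct unwinding of definitions, and the only ingredient is the compatibility of $f$ with the Lie bracket, which is precisely what it means for $f$ to be an automorphism. The one point that deserves a moment's care is the bookkeeping with $f^{-1}$ — the witness $a$ must be chosen for the element $f^{-1}(x)$ rather than for $x$ itself, after which conjugating by $f$ transports it to the witness $f(a)$ for $x$. Getting this ordering right is the whole trick, and once it is in place the conclusion is immediate.
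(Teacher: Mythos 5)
Your proof is correct and follows essentially the same route as the paper's: choose the witness $a$ for the element $f^{-1}(x)$, then apply $f$ and use that it preserves the bracket to get $f_*(D)(x)=[f(a),x]$. The only addition is your explicit check that $f_*(D)$ is a derivation, which the paper takes for granted as standard.
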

\begin{proof} Indeed, let $x\in L$. Since $D\in \ader{L}$, we have
  $$D(f^{-1}(x))=[y,f^{-1}(x)]$$  for some $y\in L$. Therefore,
$$
  f_*(D)(x)=fDf^{-1}(x)=f(D(f^{-1}(x)))=f([y,f^{-1}(x)])=[f(y),x],
$$
  which shows that $f_*(D)\in \ader{L}$.
\end{proof}

\begin{proposition}
\label{prop:ideal}
Let $L$ be a finite-dimensional Lie algebra over a field $\F$ of characteristic $0$. Then the space $\ader{L}$ is an ideal in $\der L$.
\end{proposition}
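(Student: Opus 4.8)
The plan is to upgrade the group-level invariance already recorded in Lemma~\ref{lem:autom} to an infinitesimal invariance by a differentiation argument. The idea is that $\ader{L}$ is stable under the entire automorphism group $\Aut(L)$ acting by conjugation, that this conjugation action is exactly the adjoint action of $\Aut(L)$ on its own Lie algebra, and that the differential of this action is the commutator $[E,D]$ — which is precisely the element one must show lies in $\ader{L}$ to conclude that it is an ideal.

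First I would record the relevant algebraic structure. The group $\Aut(L)\subset GL(L)$ is the closed subgroup preserving the bracket, hence a linear algebraic group over $\F$, and a dual-number computation (imposing $(\Id+\eps D)[x,y]=[(\Id+\eps D)x,(\Id+\eps D)y]$ with $\eps^2=0$) identifies its Lie algebra with $\der{L}$. The conjugation map $f\mapsto\bigl(D\mapsto fDf^{-1}\bigr)$ is the restriction to $\Aut(L)$ of the adjoint action of $GL(L)$ on $\gl(L)=\End_\F(L)$; it preserves $\der{L}=\Lie(\Aut(L))$ and coincides there with the adjoint representation of $\Aut(L)$ on its Lie algebra. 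Lemma~\ref{lem:autom} says exactly that the finite-dimensional subspace $\ader{L}\subset\der{L}$ is stable under this action. Invoking the standard principle that a subspace stable under a rational action of an algebraic group is stable under the induced action of its Lie algebra, and noting that this induced action is the differential of the adjoint representation, namely $E\mapsto\bigl(D\mapsto[E,D]\bigr)$, we conclude $[E,D]\in\ader{L}$ for all $E\in\der{L}$, $D\in\ader{L}$. Over $\F=\R$ or $\C$ this can be made fully explicit: the exponential $\exp(tE)$ is an honest automorphism for every $t$, so by Lemma~\ref{lem:autom} the curve $t\mapsto\exp(tE)\,D\,\exp(-tE)$ lies in $\ader{L}$, and differentiating at $t=0$ (legitimate since $\ader{L}$ is finite-dimensional, hence closed) yields $[E,D]\in\ader{L}$.

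The step I expect to require the most care is exactly this passage from group invariance to infinitesimal invariance: one must justify that the conjugation action may be differentiated and that the resulting derivative is the commutator, either through the algebraic-group statement that $\Aut(L)$-submodules of a rational representation are $\Lie(\Aut(L))$-submodules, or, over $\R$ or $\C$, through the one-parameter-subgroup computation together with the closedness of the finite-dimensional subspace $\ader{L}$. By contrast, the naive attempt to verify $[E,D](x)\in[L,x]$ directly from $D(x)\in[L,x]$ does not close up: writing $D(x)=[a_x,x]$ and $D(E(x))=[b,E(x)]$ gives $[E,D](x)=[E(a_x),x]+[a_x-b,E(x)]$, whose second term lies in $[L,E(x)]$ rather than in $[L,x]$. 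This stray term is precisely what forces the global, infinitesimal argument in place of a pointwise one, and it explains why finite-dimensionality and the integration of derivations to automorphisms enter the proof in an essential way.
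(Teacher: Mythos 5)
Your proposal is correct and follows essentially the same route as the paper: invoke Lemma~\ref{lem:autom} for stability of $\ader{L}$ under conjugation by $\Aut(L)$, identify $\der{L}$ as the Lie algebra of the algebraic group $\Aut(L)$ (the paper cites Corollary 13.2 of Humphreys for this, which is where characteristic $0$ enters), and conclude that a subspace stable under the group action is stable under the adjoint action of its Lie algebra, i.e.\ $[\der{L},\ader{L}]\subset\ader{L}$. Your additional remarks --- the explicit $\exp(tE)$ differentiation over $\R$ or $\C$ and the explanation of why the pointwise computation fails --- are elaborations of the same argument, not a different one.
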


\begin{proof} Consider the group $\aut L$ of automorphisms of $L$. By  Lemma~\ref{lem:autom} the subspace $\ader L\subset\der L$ is stable under the action of $\aut L$. On the other hand, $\der L$ is the Lie algebra of $\aut L$, see for instance Corollary 13.2 in \cite{H}. The subspace $\ader L$ is stable under the adjoint action of $\der L$. Hence the statement. 
\end{proof}

\begin{remark}
This proves a conjecture from~\cite{BDV} which earlier was verified only in some special cases such as  nilpotent Lie algebras~\cite{KO}.
\end{remark}

\subsection{Almost inner derivations of Lie superalgebras}

\begin{definition}
  \label{def:superder}
 Let $L=L_\zer\oplus L_\one$ be a Lie superalgebra.
A homogeneous linear map $D: L\to L$ is called a \emph{derivation of degree } $\delta\in \Zt$, if $|D|=\delta$ and
\begin{equation}
  \label{eq:leibniz}
D([x,y])=[D(x),y]+(-1)^{\delta|x|}[x,D(y)],
\end{equation}
for all homogeneous $x\in L$.

  An arbitrary linear map $D\in \End(L)$ is called a derivation if its even and odd components $D_\zer\in \End(L)_\zer$ and $D_\one\in \End(L)_\one$ are
  derivations of degree $0$ and $1$, respectively.
\end{definition}

The super Jacobi identity implies that for every $a\in L$, the map
\[
  \ad_a: L\to L, \ x\mapsto [a,x]
\]
is a derivation. Such derivations are called \emph{inner}. It is straightforward to verify that the space
$\der{L}$ of derivations of $L$ is a subalgebra of the Lie superalgebra $\End(L)$ (with the bracket given by the commutator~\eqref{eq:commutator}) and that inner derivations $\ider{L}$ form an ideal in $\der{L}$.

\

The notion of outer derivations~\eqref{eq:outer}  and their cohomological description~\eqref{eq:cohom} carry over to the super case without modifications. However, the vanishing~\eqref{eq:vanish} does not hold in general even for simple Lie superalgebras and, as it turns out, they may have non-trivial outer derivations.

\begin{example}[\em The Euler derivation]
\label{ex:euler}
Every $\Z$-graded Lie superalgebra
$$ L=\bigoplus_{i\in \Z} L_i$$
has a special derivation $\cE$ called the \emph{Euler} (or \emph{grading}) derivation, defined
by
$$
\cE(x)=ix, \text{ if } x\in L_i.$$

\

For the Lie superalgebra $L=\gl(m|n)$, this derivation is inner, $\cE=\ad_H$, where $H$ is
the diagonal matrix
$$H=\mathrm{diag}(m+n,m+n-1,\ldots,1).$$

If $m\ne n$, then the restriction of the Euler derivation to the subsuperalgebra of traceless matrices
$$
\sl(m|n)=\{A\in \gl(m|n) \col \mathrm{str A=0}\}
$$
is still inner, since in this case we can replace $H$ by
$$
\tilde{H}=H-\frac{\mathrm{str} H}{m-n}I_{m+n} \in \sl(m|n),$$
and still get $\cE=\ad_{\tilde{H}}$.

However, for the Lie superalgebra $\sl(n|n)$, as well as for its simple quotient
$$\psl(n|n)=\sl(n|n)/\F I_{n|n},
$$
the Euler derivation is no longer inner (see Section~\ref{sec:classification}) and so $\oder{L}\ne 0$.

Let $W(n)$ be the Lie superalgebra of derivations of the (supercommutative) Grassmann algebra $$\Lambda_n=\F[\xi_1, \ldots, \xi_n]$$
with $n$ free odd generators (i.e.\ vector fields on the affine supermanifold $\F^{0|n})$.
The superalgebra $W(n)$ is simple (for $n\ge 3$) with a natural $\Z$-grading induced by the grading of $\Lambda_n$. The corresponding Euler derivation is inner:  $\cE=\ad_{\mathsf E}$, where $\mathsf E$ is the \emph{Euler vector field}
$$ \mathsf{E} = \sum_{i=1}^n \xi_i \frac{\partial}{\partial \xi_i}\in W(n).$$
However, the restriction of $\cE$ to the simple subsuperalgebra
$$S(n)\subset W(n)$$
of divergence-free vector fields is no longer inner and so $\oder{S(n)}\ne 0$.
\end{example}

\

We now introduce \ai derivations of Lie superalgebras.
Unlike derivations and inner derivations, the definition of \ai derivations for Lie superalgebras requires considering only homogeneous elements.

The following notion allows to define \ai derivations without explicitly involving elements.
\begin{definition}
An \emph{elementary} Lie superalgebra is a Lie superalgebra $E$ freely generated by one homogeneous element $x\in E$.
\end{definition}
If $x$ is even, then $E$ is just a one-dimensional abelian Lie algebra $\F x=\langle x \rangle$, and if $x$ is odd, then $E$ has dimension $(1|1)$ with $E_\one=\langle x \rangle$ and $E_\zer=\langle [x,x] \rangle$ .
Every homogeneous element $y\in L$ of a Lie superalgebra $L$ defines a canonical homomorphism
$$\varphi_y: E\to L$$
sending the generator $x$ to $y$.

\begin{definition}
  Let $L$  be a Lie superalgebra. A homogeneous derivation
$$
  D\in \der{L}_{\delta}
$$
  is called \emph{\ai}if for every homomorphism
$$
  \varphi: E\to L
$$
  from an elementary superalgebra $E$ there exists a homomorphism
$$
  \psi:E'\to L
$$
from an elementary superalgebra with the generator of parity $\delta$, such that the following diagram commutes
$$
    \begin{tikzcd} E'\times E\arrow["\varphi_D\times\varphi",rr]\arrow[dd,"\psi\times\varphi"]&&\der L\times L\arrow[dd]\\
      \\ L\times L\arrow["{[\cdot,\cdot]}",rr]&& L
      \end{tikzcd}
$$

An arbitrary derivation  $D\in \der{L}$  is called \ai if its even and odd components, $D_\zer\in \der{L}_\zer$ and  $D_\one\in \der{L}_\one$, are \ai.
  \end{definition}

We will denote by $\ader{L}$ the set of all \ai derivations of $L$.

\begin{remark}
  The above definition is equivalent to the requirement that $D$ satisfies the conditions~\eqref{eq:ai} and~\eqref{eq:ai2} of Definition~\ref{def:almost} for \emph{homogeneous} $x\in L$. These conditions may fail for non-homogeneous elements.
\end{remark}

By definition, $\ader{L}$ is a subsuperspace in $\der{L}$. Like in the purely even case, it is also a subalgebra.
\begin{proposition}
  Let $L$ be a Lie superalgebra. Then $\ader{L}$ is a Lie subsuperalgebra of $\der{L}$.
\end{proposition}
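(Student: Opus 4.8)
The plan is to mimic the computation already carried out in Proposition~\ref{prop:subalg} for the purely even case, adapting every step to the sign rule~\eqref{eq:commutator} and restricting attention to homogeneous elements, as the \ai condition for superalgebras is only imposed on them. So let $D\in\ader{L}_\delta$ and $D'\in\ader{L}_{\delta'}$ be homogeneous \ai derivations, and let $x\in L$ be a homogeneous element. First I would unwind the definitions: by the equivalent formulation in the Remark, there exist homogeneous $a,a'\in L$ (with $a$ of parity $\delta+|x|$ and $a'$ of parity $\delta'+|x|$, so that the brackets $[a,x]$ and $[a',x]$ land in the correct parity) such that $D(x)=[a,x]$ and $D'(x)=[a',x]$.

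Next I would expand the super-commutator $[D,D'](x)=DD'(x)-(-1)^{\delta\delta'}D'D(x)$, substitute $D'(x)=[a',x]$ and $D(x)=[a,x]$, and apply the super-Leibniz rule~\eqref{eq:leibniz} to each term. This produces terms of the form $[D(a'),x]$, $[a',[a,x]]$-type brackets, and their primed counterparts, each carrying a sign factor determined by moving $D$ or $D'$ past the homogeneous element $a$ or $a'$. The pure-bracket terms should recombine, via the super Jacobi identity, into a single inner derivation $[[\,\cdot\,,\,\cdot\,],x]$ applied to an appropriate homogeneous combination of $a,a'$; the remaining terms assemble into $[D(a'),x]$ and $[D'(a),x]$ with their signs. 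The upshot is that $[D,D'](x)=[b,x]$ for a single homogeneous element $b\in L$ built linearly from $D(a')$, $D'(a)$, and $[a,a']$, which is exactly the \ai condition for the homogeneous element $x$. Since $[D,D']$ is automatically a derivation (as $\der L$ is a Lie subsuperalgebra of $\End(L)$), this shows $[D,D']\in\ader L$.

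The one genuine subtlety, and the step I expect to demand the most care, is the bookkeeping of signs: every time $D$ or $D'$ is commuted past a homogeneous argument, or the two derivations are interchanged in the definition of the super-commutator, a factor $(-1)$ to an appropriate product of parities appears, and one must verify that these signs are consistent so that the Jacobi recombination actually closes up. Concretely, the parity of $a$ is $\delta+|x|$ and that of $a'$ is $\delta'+|x|$, and tracking these through the Leibniz expansions is where an error would most likely creep in. Because the definition only tests homogeneous $x$, and because an arbitrary derivation is \ai precisely when both its homogeneous components are, it then suffices to observe that the even and odd components of $[D,D']$ are themselves super-commutators of homogeneous \ai derivations, so the general case follows immediately from the homogeneous one. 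I would therefore present the argument only for homogeneous $D,D',x$ and remark that the reduction to homogeneous components handles the rest.
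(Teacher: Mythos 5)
Your overall plan is exactly the paper's proof: the paper disposes of this proposition in one line, saying that $[D,D']\in\ader{L}$ is "verified exactly as in the proof of Proposition~\ref{prop:subalg} with sign factors inserted in certain places when both $D$ and $D'$ are odd," and your reduction to homogeneous $D$, $D'$, $x$ together with the closing remark about even/odd components is precisely the intended reading. However, there is one concrete error, and it sits exactly in the bookkeeping you yourself flag as the crux: the parities you assign to the witnesses $a$ and $a'$ are wrong. Since $[a,x]$ has parity $|a|+|x|$ and must equal $D(x)$, whose parity is $\delta+|x|$, you need $|a|=\delta$ (and likewise $|a'|=\delta'$), not $\delta+|x|$ and $\delta'+|x|$. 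This also matches the paper's definition, in which the witness comes from an elementary superalgebra whose generator has parity $\delta$. With your assignment, $[a,x]$ would have parity $\delta$, so it could not equal $D(x)$ at all when $x$ is odd; worse, the Jacobi sign $(-1)^{|a||a'|}$ needed for the recombination would become $(-1)^{(\delta+|x|)(\delta'+|x|)}$, which differs from the correct $(-1)^{\delta\delta'}$ exactly when $D$, $D'$ and $x$ are all odd --- the only case in which sign factors genuinely matter.

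With the parities corrected, the computation does close up as you predict: writing $D(x)=[a,x]$ and $D'(x)=[a',x]$ with $|a|=\delta$, $|a'|=\delta'$, the super Leibniz rule~\eqref{eq:leibniz} and the super Jacobi identity yield
\[
[D,D'](x)=\bigl[\,D(a')-(-1)^{\delta\delta'}D'(a)-[a,a'],\;x\,\bigr],
\]
and the element appearing on the left of the bracket is homogeneous of parity $\delta+\delta'=|[D,D']|$, as the definition requires. So your argument is correct in structure and identical in method to the paper's; it only needs the parity of the witnesses fixed so that the sign cancellation you are relying on actually takes place.
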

\begin{proof}
If $D$ and $D'$ are homogeneous \ai derivations of $L$, then the fact that $[D,D']\in \ader{L}$ is verified exactly as in the
the proof of Proposition~\ref{prop:subalg} with sign factors inserted in certain places when both $D$ and $D'$ are odd.
\end{proof}

\begin{remark}
 Since  the Lie algebra of the automorphism group $\aut L$ of a finite-dimensional Lie superalgebra $L$ over a field of characteristic zero
coincides with $\der L_\zer$, 
the same argument as in the proof of Proposition~\ref{prop:ideal} shows that the set of even \ai derivations $\ader{L}_\zer$ is an ideal in the Lie algebra $\der{L}_\zer$ of even derivations of $L$. Even though this proof cannot be directly extended to
whole subalgebra $\ader{L}\subset \der{L}$, we believe that $\ader{L}$ is an ideal in the Lie superalgebra $\der{L}$.
\end{remark}

\subsection{Examples of non-inner \ai derivations}

Naturally occurring Lie algebras typically do not have \ai derivations that are not inner. The smallest dimension of a Lie algebra with such a derivation over any field of characteristic $\ne 2$ is $5$ (see~\cite{BDV,Ve}) and the corresponding Lie algebras are nilpotent.

As we will show below, in the super case the situation is quite different. There are many examples of both even and odd non-inner \ai derivations in Lie superalgebras that are far from being solvable.

\begin{example}[\em An even non-inner \ai derivation]
\label{ex:even}
Let $L=L_\zer\oplus L_\one$ be a Lie superalgebra where
$$L_\zer=\sl(2) \ \text{ \ and \ } \ L_\one=V, \text{ the standard } \sl(2)-\text{module},$$
and the bracket is given by
$$
[(g,v),(g',v')]:=([g,g'],gv'-g'v).
$$
 Then the map
$$
  D: L\to L, \text{ given by } D|_{L_\zer}=0 \text{ and } D|_V=\Id_V,
$$
is an even \ai derivation which is not inner.
\end{example}
\begin{proof}
It is easy to see that $D$ is a derivation (for instance, because $L$ is a $\Z$-graded superalgebra and $D$ is its Euler derivation).

The derivation $D$ is not inner, since $I\not\in L_0=\sl(2)$, but it is \ai because
\[D(x)=
  \begin{cases}
    0=[0,x], & \text{ \ if \ } x\in L_\zer\\
    x=[g,x], & \text{ \ if \ } x\in L_\one \text{ and } g \text{ is any element of } \sl(2) \text { fixing } x.
\end{cases}
\]
\end{proof}

\

\begin{example}[\em An odd non-inner \ai derivation]
\label{ex:odd}
As in the previous example, let $V$ be the standard $\sl(2)$-module and consider a Lie superalgebra $L$ with
$$
  L_\zer=V \text{ \ and \ } L_\one=\sl(2)\oplus V,
$$
and the only nonzero component of the bracket being
$$
[\, ,]: L_\one\otimes L_\one \to L_\zer=V, \quad (g,w)\otimes (g',w')\mapsto gv'-g'v.
$$

Then the odd map  $ D: L\to L$,  given by
  $$
  D|_{L_\zer}=0 \text{ \ and \ } D|_{L_\one}=\mathrm{pr}_2: \sl(2)\oplus V \to V=L_0
  $$
  is an \ai derivation which is not inner.
\end{example}

\begin{proof}
The proof is analogous to the previous example and is left to the reader.
\end{proof}

\begin{remark}
An essential role in the above two examples played a Lie algebra $\g$ with a \emph{transitive representation} $V$ (that is, a $\g$-module with an element $x\in V$ such that $\g\cdot x=V$).
Such pairs $(\g,V)$ appear in the study of prehomogeneous vector spaces~\cite{Vi,SK}.
In the next subsection we show how they can be used to produced a large family of \ai derivations for Lie superalgebras.
\end{remark}

\subsection{Almost inner derivations and prehomogeneous vector spaces}

\begin{definition}
Recall that a \emph{prehomogeneous vector space}~\cite{SK} is a vector space $V$ with a linear action of an algebraic group $G$, such that $V$ has an open $G$-orbit. This is equivalent to the requirement that the associated representation on $V $of the Lie algebra $\g=\Lie(G)$ of $G$ is \emph{transitive}~\cite{Vi}, that is,
$$
\g\cdot v=V \text{ for some } v\in V.
$$
We will call such pairs $(G,V)$ and $(\g,V)$ \emph{prehomogeneous spaces}.
\end{definition}


\begin{proposition}
\label{prop:prehom}
Let $\g$ be a Lie algebra and let $V$ be an $\g$-module.
Consider the $\Z$-graded Lie superalgebra $L$ with the even part $L_\zer=\g$,
the odd part $L_\one=V$, and the bracket defined by
\[
  [x,y]=\begin{cases}
\    [x,y], & \text{if } x,y\in L_\zer,\\
\  xy, & \text{if }  x\in L_\zer, y\in L_\one,\\
\ 0, & \text{if }  x,y\in L_\one.
  \end{cases}
\]
Let $\cE\in \der{L}$ be the Euler derivation from Example~\ref{ex:euler}.

Suppose that $(\g,V)$ is a prehomogeneous space such that for every vector $w\in V$ its orbit $Gw\subset V $ is conical. Then $\cE$ is an \ai derivation of $L$.

If, in addition, $\g$ is perfect, that is, $\g=[\g,\g]$, then $\cE$ is a  non-inner \ai derivation.
\end{proposition}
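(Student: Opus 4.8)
The plan is to establish the two assertions separately. Since $\cE$ is an even derivation, to show it is an \ai derivation it suffices, by the elementwise reformulation in the Remark following the definition, to exhibit for each \emph{homogeneous} $x$ an \emph{even} witness $a\in\g$ (the witnessing element must have the parity $\delta=0$ of $\cE$) with $\cE(x)=[a,x]$. I split into the two homogeneous cases. If $x\in L_\zer=\g$, then $\cE(x)=0=[0,x]$, so $a=0$ works. If $x\in L_\one=V$, then $\cE(x)=x$, while for $a\in\g$ the bracket $[a,x]$ is by definition the module action $a\cdot x$. Hence the entire \ai property collapses to the single claim that $x\in\g\cdot x$ for every $x\in V$.

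The conical hypothesis is exactly what delivers this claim. Fix $x\in V$, which I may assume nonzero. Over $\C$ the orbit $Gx$ is a smooth, locally closed subvariety of $V$ whose tangent space at $x$ is the image of the differential at the identity of the orbit map $g\mapsto gx$, namely $T_x(Gx)=\g\cdot x$ (this identification is where characteristic zero is used). Because $Gx$ is conical, the scaling curve $t\mapsto tx$ takes its values in $Gx$; differentiating it at $t=1$ shows that its velocity vector $x$ lies in $T_x(Gx)=\g\cdot x$. Thus there is $a\in\g$ with $a\cdot x=x$, completing the proof that $\cE$ is an \ai derivation. I note that prehomogeneity itself is not logically needed for this half of the statement: only the conicity of each orbit is used.

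For the non-inner statement, assume $\g$ is perfect and suppose toward a contradiction that $\cE=\ad_H$ for some $H=H_0+H_1\in\g\oplus V=L$. Evaluating on $y\in V$ and using $[V,V]=0$, I obtain $\ad_H(y)=[H_0,y]=H_0\cdot y$, so the requirement $\cE(y)=y$ forces $H_0$ to act on $V$ as $\Id_V$. On the other hand, writing $H_0=\sum_i[a_i,b_i]$ (possible since $\g=[\g,\g]$), the operator by which $H_0$ acts on $V$ is $\sum_i[\rho(a_i),\rho(b_i)]$, a sum of commutators of endomorphisms and therefore of trace zero; but $\Id_V$ has trace $\dim V\neq 0$ (here $V\neq 0$, since otherwise $\cE$ vanishes and the claim is vacuous). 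This contradiction shows that $\cE$ is not inner.

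I expect the geometric step --- that a conical orbit forces $x\in\g\cdot x$ --- to be the main point requiring care, since it rests on the identification $T_x(Gx)=\g\cdot x$ and on the scaling curve lying inside the orbit. The remaining computations are routine; the only subtlety is the super-sign convention when evaluating $\ad_H$, which happily does not intervene on $V$ because $[V,V]=0$, so the trace argument proceeds without sign bookkeeping.
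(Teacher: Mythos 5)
Your proof is correct and follows essentially the same route as the paper: conicity of the orbit $Gx$ gives $x\in T_x(Gx)=\g\cdot x$, hence a witness $a\in\g$ with $a\cdot x=x$, and perfectness of $\g$ rules out any element acting as the identity on $V$ via the trace-of-commutators argument. The only cosmetic difference is that the paper splits into the open orbit (where $\g w=T_w(\cO)=V$) and the remaining orbits, whereas you treat all orbits uniformly by conicity --- which also supports your correct side remark that prehomogeneity itself is not logically needed for that half of the statement.
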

\begin{proof}
  Indeed, let $\cO$ be an open $G$-orbit in $V$. Then for every $w\in \cO$ the tangent space $T_w\cO$ coincides with the whole $V$ and so we have
  $$\g w=T_w(\cO)=V.$$
  Now, consider $v\in V-\cO$. Since the orbit $Gv$ is conical, the line $\C^*v$ belongs to $Gv$ and therefore we obtain that
$$v \in T_v(\C^*v)\subset T_v(Gv)=\g v.$$
Thus $\cE(x)=x\in \g x$ for all $x\in V$ which shows that $\cE$ is an \ai derivation.

Now, if $\g$ is perfect, then for all $g\in \g$ the trace of $g$ in any finite-dimensional representation must be zero. Therefore, no $g\in \g$ can act on $V$ as non-zero scalar. Since $\cE$ acts as the identity on $L_\one=V$, this implies that the derivation $\cE$ is not inner.
\end{proof}

It is not hard to show that the conditions of the above proposition are also necessary, that is the derivation $\cE$ is almost inner only if $(\g,V)$ is a prehomogeneous space with conical orbits. We leave this to the reader.

\

Vinberg~\cite{Vi} described all prehomogeneous spaces $(\g,V)$  with a simple Lie algebra $\g$ and irreducible representation $V$ over $\C$. His result is as follows.
\begin{proposition}[\cite{Vi}]
  \label{prop:vi}
  The following table contains every prehomogeneous space $(\g,V)$, where $\g$ is a simple complex Lie algebra and $V$ is an irreducible $\g$-module.
  \begin{center}
  \begin{tabular}{|c|c|}
    \hline
    $\g$ & $V$ or $V^*$ \\
    \hline
    \hline
    $\sl(n), \ n\ge 2$ & $\C^n$\\
    \hline
    $\sl(2n+1), \ n\ge 2$ & $\Lambda^2(\C^{2n+1})$\\
    \hline
    $\sp(2n), \ n\ge 2$ & $\C^{2n}$\\
    \hline
    $\so(10)$ & $\mathrm{spin}(10)$\\
    \hline
  \end{tabular}
\end{center}
\end{proposition}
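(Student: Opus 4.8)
The plan is to treat this as a finite classification problem, reducing the question to a dimension count followed by a case-by-case verification, exactly in the spirit of Vinberg's original argument. The starting observation is that, by the remark following Proposition~\ref{prop:prehom}, $(\g,V)$ is prehomogeneous precisely when $\g\cdot v=V$ for a generic $v\in V$; equivalently, the generic isotropy subalgebra $\g_v$ has dimension $\dim\g-\dim V$. In particular, an open orbit can exist only if
\[
\dim V\le \dim\g .
\]
This single inequality is what makes the classification finite, and I would take it as the backbone of the proof.

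First I would reduce to finitely many candidate modules. Writing $V=V_\lambda$ for a dominant integral weight $\lambda=\sum_i a_i\omega_i$, the Weyl dimension formula expresses $\dim V_\lambda$ as a product over positive roots that grows polynomially in the coordinates $a_i$ with degree equal to the number $|\Phi^+|$ of positive roots, whereas $\dim\g=2|\Phi^+|+\mathrm{rank}\,\g$ is fixed. Hence $\dim V_\lambda\le\dim\g$ forces the $a_i$ into a bounded region, leaving only finitely many $\lambda$ in each Lie type: in the classical types one checks these by hand (essentially the fundamental and near-fundamental weights), and in the exceptional types the list of small representations is short. This step is routine but bookkeeping-heavy.

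Next I would decide, for each surviving candidate, whether an open orbit actually exists. Here it is convenient to use the reformulation that $(\g,V)$ is prehomogeneous if and only if $\C(V)^{\g}=\C$, i.e.\ there are no non-constant rational $\g$-invariants on $V$. The positive cases admit a direct verification: $\sp(2n)$ and $\sl(n)$ act transitively on the nonzero vectors of $\C^{2n}$ and $\C^n$, so the complement of the origin is already a single open orbit; for $\Lambda^2(\C^{2n+1})$ one checks that a generic alternating form, of maximal rank $2n$, has stabilizer of exactly the predicted dimension; and for the spinor module of $\so(10)$ one identifies the open orbit of non-isotropic pure spinors. The negative cases are dispatched by exhibiting an invariant: the Pfaffian obstructs $\Lambda^2(\C^{2m})$ in even dimension (which is precisely why the table contains only odd $2n+1$), a nondegenerate symmetric form obstructs $\so(n)$ on $\C^n$, and Casimir-type invariants obstruct the adjoint and other large modules.

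I expect the main obstacle to be this verification step for the few genuinely borderline candidates, where neither a transitive-action shortcut nor an obvious invariant is available---most notably the spinor representation $\mathrm{spin}(10)$ of $\so(10)$, and the parity distinction between $\Lambda^2(\C^{2n+1})$ and $\Lambda^2(\C^{2n})$. For these one cannot avoid an honest computation of the generic stabilizer (or of the invariant ring), and getting the dimension count to come out on the nose is where the real content of the theorem lies. An alternative, more conceptual route would be to realize several of these pairs as the degree-one piece $(\g_0,\g_1)$ of a short $\Z$-grading of a larger simple Lie algebra and invoke Vinberg's theory of $\theta$-groups to read off the open orbit; I would keep this in reserve as a uniform way to handle the exceptional case rather than treating it by brute force.
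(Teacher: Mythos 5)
First, a point of reference: the paper does not prove Proposition~\ref{prop:vi} at all --- the statement is imported verbatim from Vinberg \cite{Vi} (hence the citation in its header), and the surrounding text only uses it as input, together with the finiteness-of-orbits fact quoted from \cite{Ka2}. So there is no internal argument to measure your proposal against; the benchmark is the classification literature itself. Against that benchmark your skeleton is the right one and is essentially Vinberg's and Sato--Kimura's: the inequality $\dim V\le\dim\g$, enumeration of small highest weights via the Weyl dimension formula, exclusion of most candidates by exhibiting a nonconstant polynomial invariant (determinant of a form, Pfaffian, Casimir-type invariants), and a generic-stabilizer computation for the survivors.

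As written, however, the proposal has genuine gaps, some of which you acknowledge. (a) The finiteness step is not literally correct: the classical series have unbounded rank, so the bound $\dim V_\lambda\le\dim\g$ does not leave ``finitely many $\lambda$ in each Lie type'' but finitely many \emph{families} of weights indexed by the rank (for $\sl(n)$ one must dispose of $S^2\C^n$, $\Lambda^k\C^n$, the adjoint module, etc., uniformly in $n$), and this uniform-in-rank bookkeeping is a real, separate argument rather than a hand check. (b) More importantly, the completeness assertion (``the table contains \emph{every} such pair'') hinges precisely on the borderline candidates that survive the dimension bound but carry no obvious invariant --- e.g.\ $\Lambda^3\C^n$ for $n=6,7,8$ and the larger spin modules --- and on the positive verification for $\mathrm{spin}(10)$; these are exactly the computations you defer, while correctly noting that ``the real content of the theorem lies'' there. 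A classification whose sporadic cases are left as a program establishes the easy inclusion (the listed pairs are prehomogeneous, modulo the spinor case) but not the converse. (c) A small but genuine slip: the open $\mathrm{Spin}(10)$-orbit in the $16$-dimensional spinor module consists of \emph{generic}, i.e.\ non-pure, spinors; the pure spinors form the minimal (closed) orbit, so ``non-isotropic pure spinors'' names the wrong locus. In short: correct strategy, matching the cited source in outline, but the steps that make the statement a theorem rather than a plausible table are not carried out.
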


\

Sato and Kimura~\cite{SK} classified prehomogeneous spaces $(G,V)$ for complex reductive groups with an irreducible $V$ up to so-called \emph{castling equivalence}. In each equivalence there is a unique \emph{reduced} representative $(G,V)$ with minimal $\dim V$. The requirement that $\g$ is perfect restricts our attention only to semisimple groups $G$. Here is their list of reduced pairs $(\g,V)$ with a semisimple $\g$ and irreducible $\g$-module $V$.

\begin{proposition}[\cite{SK}]
 \label{prop:sk}
  Every prehomogeneous space $(\g,V)$ with a semisimple complex Lie algebra $\g$ and an irreducible $\g$-module $V$ is castling equivalent to one of the following reduced pairs.
  \begin{center}
    \begin{tabular}{|c|c|}
      \hline
      $\g$ & $V$ \\
      \hline
      \hline
      $\g\oplus \sl(n), \text{ where }$& {$\C^m\otimes \C^n$} \\
       $\g\subset \sl(m), \ n\ge m+1$ & \\
      \hline
      $\sl(m)\oplus \sl(n), \ 2n\le m$ & $\C^m\otimes \C^n$\\
      \hline
      $\sl(2m+1)\oplus \sl(n), \ n\le 2$ & $\Lambda^2(\C^{2m+1})\otimes \C^n$\\
      \hline
      $\sp(2m)\otimes \sl(n), \ n\le m$ & $\C^{2m}\otimes \C^n$\\
      \hline
      $\so(10)$ & $\mathrm{spin}(10)$\\
      \hline
    \end{tabular}
  \end{center}
\end{proposition}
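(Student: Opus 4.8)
Since Proposition~\ref{prop:sk} is the deep classification theorem of Sato and Kimura, the honest plan is not to reprove it from scratch but to describe the two mechanisms that organize the proof: the \emph{castling transform}, which produces the equivalence relation and collapses each class to a finite, low-dimensional representative, and a \emph{dimension-and-isotropy count}, which cuts the surviving possibilities down to the listed table. The plan is to first set up castling: given a triplet $(\g\oplus\sl(n),\,W\otimes\C^n)$ in which $W$ is a $\g$-module with $k=\dim W<n$, one checks that a generic element of $W\otimes\C^n$, viewed as a linear map $\C^n\to W$, is surjective with an $(n-k)$-dimensional kernel, and that sending such an element to its kernel identifies the generic orbits of $(\g\oplus\sl(n),\,W\otimes\C^n)$ with those of $(\g\oplus\sl(n-k),\,W^*\otimes\C^{n-k})$. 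Hence one triplet is prehomogeneous if and only if the other is, and repeatedly applying this move strictly lowers $\dim V$ until it can no longer be lowered. This yields, in each castling class, a unique \emph{reduced} representative of minimal $\dim V$, and reduces the problem to listing reduced pairs $(\g,V)$ with $\g$ semisimple and $V$ irreducible.

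Next I would impose the numerical constraints. Writing $\g=\bigoplus_i\g_i$ and, since $V$ is irreducible, $V=\bigotimes_i V_i$ with each $V_i$ an irreducible $\g_i$-module, an open orbit forces the infinitesimal action $\g\to V$, $X\mapsto X\cdot v$, to be surjective at a generic $v$; in particular the generic isotropy subalgebra $\g_v$ satisfies $\dim\g_v=\dim\g-\dim V\ge 0$, so $\dim\g\ge\dim V=\prod_i\dim V_i$. Because $\dim V$ grows multiplicatively in the factors while $\dim\g$ grows only additively, this inequality is extremely restrictive: it bounds the number of simple factors and forces each $V_i$ to be a \emph{small} fundamental representation (the standard representation of a classical $\g_i$ or its dual, a second exterior power, or a spin representation). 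At this stage only finitely many candidate pairs remain to be examined.

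Finally I would run the casework by the number of simple factors. For a single simple factor the prehomogeneous irreducibles are exactly Vinberg's Proposition~\ref{prop:vi}; all but the spinor case reappear as the rank-one specializations of the multi-factor rows (for instance $\sl(n)$ on $\C^n$ is the $m=1$ case of the first row), while $(\so(10),\mathrm{spin}(10))$ is the one genuinely isolated semisimple entry. For two or more factors one argues that some $\g_i$ must be an $\sl(n)$ (or an $\sp(2m)$) acting by its standard representation on one tensor slot; pairing this factor with the remaining data and applying the castling move pushes $n$ into the reduced range (for instance $2n\le m$ in the $\sl(m)\oplus\sl(n)$ row, $n\le 2$ in the $\Lambda^2$ row, $n\le m$ in the symplectic row), after which one confirms prehomogeneity of each survivor by exhibiting its open orbit: full-rank rectangular matrices for $\C^m\otimes\C^n$, generic pencils of skew forms for $\Lambda^2(\C^{2m+1})\otimes\C^n$, and so on. The main obstacle, and the bulk of the work, is precisely this last verification together with the completeness claim: for each candidate one must compute the generic isotropy subalgebra and confirm that $X\mapsto X\cdot v$ is generically surjective, and simultaneously rule out every combination of factors and representations not appearing in the table. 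The genuinely exceptional point is the spinor entry, where $\dim\so(10)=45$ and $\dim\mathrm{spin}(10)=16$ give a $29$-dimensional generic stabilizer, and one must show by hand that a generic half-spinor has an open orbit --- the delicate computation that makes this classification more than bookkeeping.
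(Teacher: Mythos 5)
First, a point of calibration: the paper does not prove this proposition at all --- it is stated with the citation \cite{SK} and used purely as an input (together with Propositions~\ref{prop:prehom} and~\ref{prop:conical}) for manufacturing almost inner derivations. So there is no internal proof to compare against, and your decision to outline the mechanism of the Sato--Kimura classification rather than reprove it is the right genre of answer. Your overall architecture is also the correct one: castling equivalence to collapse each class to a minimal representative, the inequality $\dim\g\ge\dim V$ forced by the existence of an open orbit, and then casework over the simple factors with generic-isotropy computations, with Proposition~\ref{prop:vi} handling the one-factor case.

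However, the single place where you commit to precise mathematics --- the castling transform --- is misstated, in a way that would derail the reduction step. Castling applies to $(\g\oplus\sl(n),\,W\otimes\C^n)$ precisely when $1\le n\le m-1$, where $m=\dim W$, and the partner is $(\g\oplus\sl(m-n),\,W^*\otimes\C^{m-n})$, so that the two $\sl$-slots satisfy $n+n'=m$; the identification of generic orbits goes through the \emph{image} of a generic (injective) map $\C^n\to W$, an $n$-dimensional subspace of $W$, matched with its annihilator in $W^*$. You instead take $k=\dim W<n$ and claim a partner $(\g\oplus\sl(n-k),\,W^*\otimes\C^{n-k})$ obtained from the \emph{kernel} of a generic surjection $\C^n\to W$. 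This cannot work: the kernel lives inside $\C^n$, on which $\g$ does not act, so it retains no information about $\g$-orbits; and in the regime $\dim W<n$ there is no castling partner at all, since the correct formula $n'=m-n$ would be negative. What is true in that regime is something different: the triplet is \emph{automatically} prehomogeneous, because $\sl(n)$ with $n\ge\dim W+1$ already acts transitively on the surjections $\C^n\to W$. These triplets are exactly the first row of the table (the condition $n\ge m+1$), which is why they survive as an infinite family of reduced pairs rather than being castled away --- your version of castling would incorrectly ``reduce'' them further. Beyond this, your sketch explicitly defers the content of \cite{SK} (the completeness of the case analysis and the generic-isotropy verifications, e.g.\ that a generic half-spinor of $\so(10)$ has a $29$-dimensional stabilizer and hence an open orbit); given that the paper itself only cites the result this is acceptable, but the castling step is wrong as stated and must be repaired before the outline is even a faithful roadmap.
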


In order to use these prehomogeneous spaces to construct \ai derivations of Proposition~\ref{prop:prehom}, we need to be able to decide when all orbits are conical.
 While the open orbit is always conical, this is a non-trivial task in general. However, since all prehomogeneous spaces in Vinberg's list from Proposition~\ref{prop:vi} have finitely many orbits (see~\cite{Ka2}), all their orbits are conical due to the following fact.

\begin{proposition}
\label{prop:conical}
Let $G$ be a complex algebraic group and let $V$ be a $G$-module with finitely many orbits. Then $(G,V)$ is a prehomogeneous space such that every $G$-orbit in $V$ is conical.
\end{proposition}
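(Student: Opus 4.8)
The plan is to prove the statement in two parts: first that finitely many orbits forces the existence of an open orbit (prehomogeneity), and second that each orbit is automatically conical. Let me think about both.

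\textbf{Prehomogeneity.} For the first claim, I would argue that among the finitely many $G$-orbits in $V$, at least one must have dimension equal to $\dim V$. The cleanest way is a dimension/irreducibility argument: $V$ is an irreducible variety, and it is the disjoint union of finitely many $G$-orbits $\cO_1, \ldots, \cO_k$. Each orbit, being the image of the morphism $G \to V$, $g \mapsto gv$, is a locally closed subvariety, and its Zariski closure $\overline{\cO_i}$ is an irreducible closed subvariety. Since $V = \bigcup_i \overline{\cO_i}$ is a finite union of irreducible closed sets and $V$ itself is irreducible, we must have $V = \overline{\cO_i}$ for some $i$. Then $\cO_i$ is a dense orbit; since orbits are open in their closure (a standard fact for algebraic group actions — orbits are locally closed, hence open in their closure), $\cO_i$ is open in $V$. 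This exhibits an open orbit, so $(G,V)$ is prehomogeneous.

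\textbf{Conicality.} For the second claim, I want to show every orbit $Gw$ is stable under the scaling action of $\C^*$ on $V$. Here I would enlarge the acting group: consider the group $\tilde G = G \times \C^*$ acting on $V$, where $\C^*$ acts by scalar multiplication. The $\tilde G$-orbits are precisely the unions $\C^* \cdot (Gw) = \bigcup_{\lambda \in \C^*} \lambda (Gw)$ of the $G$-orbits along the scaling direction. Now I claim $\tilde G$ also has finitely many orbits: since the scaling action is just enlarging orbits by the one-parameter group $\C^*$, each $\tilde G$-orbit is a finite or one-parameter union of $G$-orbits, and because there are only finitely many $G$-orbits to begin with, there can be only finitely many $\tilde G$-orbits. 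Applying the first part to $\tilde G$, we find $\tilde G$ has an open dense orbit, but more importantly each $\tilde G$-orbit is a union of $G$-orbits. The key point is then to show $Gw$ is itself $\C^*$-stable, i.e.\ that scaling maps each $G$-orbit to itself rather than to a different $G$-orbit.

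\textbf{The main obstacle.} The delicate step is precisely this last point: showing $\lambda \cdot Gw = Gw$ for all $\lambda \in \C^*$, not merely that $\C^* \cdot Gw$ is a $\tilde G$-orbit. The map $\lambda \mapsto \lambda \cdot Gw$ sends $\C^*$ into the finite set of $G$-orbits; since $\C^*$ is connected and irreducible and the target is discrete, this map must be constant, giving $\lambda \cdot Gw = Gw$ for all $\lambda$. To make this rigorous I would consider the morphism $\C^* \times Gw \to V$, $(\lambda, u) \mapsto \lambda u$, whose image is the irreducible constructible set $\C^* \cdot Gw$; this image meets only finitely many orbits, and being irreducible it is contained in the closure of a single orbit. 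A continuity/connectedness argument then pins down that the orbit through $\lambda w$ is independent of $\lambda$, yielding $Gw = \C^* \cdot Gw$, which is exactly conicality. I expect the connectedness argument identifying the orbit-valued map on $\C^*$ as constant to be the crux, with the finiteness hypothesis entering precisely to make the target of that map discrete.
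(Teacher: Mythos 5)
Your proposal is correct and follows essentially the same route as the paper: irreducibility of $V$ forces one orbit closure to be all of $V$, and linearity of the $G$-action makes $\C^*$ permute the finitely many $G$-orbits, so connectedness of $\C^*$ forces each orbit to be $\C^*$-stable. The detour through $\tilde G = G \times \C^*$ is unnecessary --- the paper phrases the same connectedness argument directly as the triviality of a homomorphism $\C^* \to S_n$ --- but the substance is identical.
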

\begin{proof}
  Let
  $$V/G=\{\cO_1,\ldots,\cO_n\}
  $$
  be the set of $G$-orbits in $V$. The closure $\overline{\cO_i}$ of each orbit is an algebraic variety and,
  since $V$ is irreducible, the decomposition
  $$ V=\bigcup_{i=1}^n\overline{\cO_i}$$
  implies that $V=\overline{\cO_i}$ for some $i$. Thus, $(G,V)$ is a prehomogeneous space.

If $v$ and $v'$ belong to the same $G$-orbit, then
  $v'=gv$ for some $g\in G$ and, since $G$ acts linearly on $V$, for $\lambda\in \C^*$ we have   $\lambda v' = g\cdot (\lambda v)$. Therefore, $\C^*$ acts on the set of orbits $V/G$ and, since $|V/G|=n < \infty$, this gives a homomorphism $\C^*\to S_n$. But $\C^*$ is connected, therefore this homomorphism is constant and so every orbit is invariant under the $\C^*$-action, as required.
\end{proof}

Example~\ref{ex:odd} of an odd non-inner \ai derivation can also be generalized using prehomogeneous spaces.

\begin{proposition} Let $\g=\Lie(G)$ be the Lie algebra of a connected algebraic group $G$ and let $V$ be a $G$-module. Suppose that $\g$ is semisimple and that all $G$-orbits in $V$ are conical.

  Consider the Lie superalgebra $L$ with $L_\one=\g\oplus V$, \ $L_\zer=V$, and the bracket given by
  $$
  [L_\one,L_\one]=[L_\zer,L_\zer]=0
  $$
  and
  $$
[(X,w),v]=(0,Xv) \text{ \ for \ } (X,w)\in L_\one \text{ \ and \ } v\in L_\zer.$$
  Then the map $D\in \End{L}$ such that
 $D(v)=(0,v)$ for $v\in L_\zer$ and $D(L_\one)=0$, is an odd non-inner \ai derivation.
\end{proposition}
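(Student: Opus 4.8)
The plan is to verify in turn the three asserted properties of $D$—that it is an odd derivation, that it is almost inner, and that it is not inner—following the pattern of Example~\ref{ex:odd} and Proposition~\ref{prop:prehom}, of which this statement is a common generalization. Throughout I use that, since $\g$ is semisimple and hence perfect, the structure map $\g\to\gl(V)$ lands in $\sl(V)$, i.e.\ every $X\in\g$ acts on $V$ as a traceless operator.

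First I would confirm the super Leibniz rule~\eqref{eq:leibniz} for $\delta=\one$. Since the only nonzero bracket is $[(X,w),v]=(0,Xv)$ with $(X,w)\in L_\one$, $v\in L_\zer$, and since $D(L_\one)=0$ and $[L_\one,L_\one]=0$, a short case analysis over the parities of the two arguments (both in $L_\zer$; one in each; both in $L_\one$) shows that each side of~\eqref{eq:leibniz} vanishes identically. This step is purely computational.

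The substantive point is the almost inner property, to be checked on homogeneous $x$ with the auxiliary element $a$ taken odd, matching $|D|=\one$. For odd $x\in L_\one$ we have $D(x)=0$, and since $[L_\one,L_\one]=0$ the choice $a=0$ already gives $D(x)=[a,x]$. For even $x=v\in L_\zer=V$ we have $D(v)=(0,v)$, while $a=(X,0)\in L_\one$ gives $[a,v]=(0,Xv)$; it therefore suffices to produce $X\in\g$ with $Xv=v$, that is, to show $v\in\g v$. This is exactly where the conicality hypothesis is used, precisely as in Proposition~\ref{prop:prehom}: the orbit $Gv$ being conical forces $\C^* v\subset Gv$, whence $v\in T_v(\C^* v)\subset T_v(Gv)=\g v$, giving the required $X$. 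This establishes $D\in\ader{L}$ and is the only place the hypothesis on the orbits enters.

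Finally, to see that $D$ is not inner, suppose $D=\ad_a$; as $D$ is odd, $a=(X,w)\in L_\one$. Evaluating on $L_\zer$ forces $[(X,w),v]=(0,Xv)=(0,v)$ for all $v\in V$, so $X$ would act on $V$ as $\Id_V$. But every element of $\g$ acts on $V$ with trace $0$, whereas $\mathrm{tr}(\Id_V)=\dim V\ne 0$ (for $V\ne 0$); so no such $a$ exists. I expect the sole genuine obstacle to be the geometric input $v\in\g v$, which conicality supplies; the remaining verifications are direct.
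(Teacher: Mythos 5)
Your proof is correct and takes essentially the same route as the paper's: conicality of each orbit gives $v\in\g v$ for $v\neq 0$, hence an odd element $(X,0)$ with $[(X,0),v]=D(v)$, while non-innerness follows because no element of the semisimple (hence perfect) $\g$ can act as the identity on $V$. You merely fill in details the paper leaves to the reader, namely the case-by-case Leibniz verification and the trace argument ruling out $Xv=v$ for all $v$.
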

\begin{proof} We leave to the reader to check that $D$ is a derivation. There
  is no $X\in \g$ such that $Xv=v$ for all $v\in V$. Therefore $D$ is not inner.
  On the other hand, since all orbits of $G$ are conical, we have $\F v\in \g v$
  for any $v\neq 0$. Therefore for any $v\in V$, $v\neq 0$ there exists $X\in\g$ such that
  $Xv=v$. Thus, $D(v)=[(X,0),v]$. So $D$ is almost inner.
  \end{proof}

\begin{example}
Let $L$ denote a following subalgebra of $\gl(n|n+k)$ with $k<n$ consisting of block matrices
\[
 X= \left(     \begin{array}{c|cc}
      A & B & c \\
      \hline
      0 & A & d \\
      0 & 0 & C
    \end{array}   \right)
\]
where $A$ lies in some Lie subalgebra of $\gl(n)$, $C$ lies in some Lie subalgebra of $\gl(k)$, $B$ is an arbitrary traceless matrix, $c,d$ are arbitrary matrices in $\F^{nk}$.  Then the matrix
\[
  D= \left(     \begin{array}{c|cc}
    0 & 1_n & 0 \\
    \hline
    0 & 0 & 0 \\
    0 & 0 & 0
  \end{array}   \right)
\]
lies in the normalizer of $L$ and gives an example of an odd almost inner derivation. To check this, note that
$$
  [D,X]=
    D= \left(     \begin{array}{c|cc}
      0 & 0 & d \\
      \hline
      0 & 0 & 0 \\
      0 & 0 & 0
    \end{array}   \right)~.
$$
Take $d\neq 0$ and choose $B$ such that $Bd=d$. Then
\[
  Y= \left(     \begin{array}{c|cc}
    0 & B & 0 \\
    \hline
    0 & 0 & 0 \\
    0 & 0 & 0
  \end{array}   \right)
\]
satisfies $[Y,X]=[D,X]$.
\end{example}

\

\section{Derivations of simple Lie superalgebras}
\label{sec:classification}

\subsection{Simple Lie superalgebras}

In this section we assume that $\F=\mathbb C$. All complex simple Lie superalgebras are classified by Kac~\cite{Ka}.
We combine the results of this classification in the following two tables. In the first table we list all $\Z$-graded Lie superalgebras
$\ds L=\bigoplus_{i=-1}^n L_i$,  such that $L_0$ is a reductive Lie algebra.
\renewcommand{\arraystretch}{1.5}
\begin{center}
		\begin{tabular}{|c|c|c|}
			\hline
			$L$ & $L_0$ & $L_i$\\
                  \hline
                  $\sl(m|n),\, m\ne n,\ mn\ge 2$&$\sl(m)\oplus\sl(n)\oplus \C$&$L_1=V_m\otimes V_n^*,\,L_{-1}=V_m^*\otimes V_n$ \\
                  \hline

                  $\psl(n|n),\, n\ge 2$&$\sl(n)\oplus\sl(n)$&$L_1=V_n\otimes V_n^*,\,L_{-1}=V_n^*\otimes V_n$ \\
                  \hline
                  $\mathfrak{spe}(n),\, n\ge 3$&$\sl(n)$&$L_1=S^2V_n,\,L_{-1}=\Lambda^2V_n$ \\
                   \hline
                    $\osp(2|2n),\, n\ge 2$&$\sp(2n)\oplus \C$&$L_1=V_n,\,L_{-1}=V_n$ \\
                  \hline
                    $W(n),\, n\ge 3$&$\gl(n)$&$L_i=\Lambda^iV_n\otimes V_n^*$ \\
                  \hline
                      $S(n),\, n\ge 3$&$\sl(n)$&$L_i=(\Lambda^iV_n\otimes V_n^*)/\Lambda^{i-1}V_n$ \\
                  \hline
                      $SH(n),\, n\ge 4$&$\so(n)$&$L_i=\Lambda^{i+2}V_n,\, i\leq n-3$  \\
                  \hline
                \end{tabular}
              \end{center}
          In this table we denote by $V_n$ the defining (standard) representation of the classical Lie algebras $\gl(n),\sl(n),\so(n)$ and $\sp(n)$.

              The simple Lie superalgebra  $S(n)$ for even $n$ has a simple deformation $\tilde S(n)$. It has a filtration such that
              $S(n)$ is the associated graded Lie superalgebra.

              The simple superalgebra $SH(n)$ is the derived superalgebra $[H(n),H(n)]$ of the Lie superalgebra $H(n)$ of Hamiltonian vector fields
              on the affine supermanifold $\F^{0|n}$ with the standard symplectic structure.

              All other simple complex Lie superalgebras have semisimple even part $L_\zer$, and $L_\one$ is a simple $L_\zer$-module.
              They are listed in the second table.

              \renewcommand{\arraystretch}{1.5}
\begin{center}
		\begin{tabular}{|c|c|c|}
			\hline
			$L$ & $L_\zer$ & $L_\one$\\
                   \hline
                   $\osp(m|2n),\, m\neq 2$&$\so(m)\oplus\sp(2n)$&$V_m\otimes V_{2n}$ \\
                   \hline
                    $D(2,1;a)$&$\sl(2)\oplus\sl(2)\oplus\sl(2)$&$V_2\otimes V_2\otimes V_2$ \\
                  \hline
                      $G(3)$&$\sl(2)\oplus G_2$&$V_2\otimes W_7$ \\
                  \hline
                      $F(4)$&$\sl(2)\oplus \so(7)$&$V_2\otimes\mathrm{spin}_7$  \\
                  \hline
                    $\psq(n),\ n\ge 3$&$\sl(n)$&$\ad$  \\
                  \hline
                \end{tabular}
              \end{center}
Here $W_7$ is the $7$-dimensional simple $G_2$-module.




\subsection{Description of derivations}
Here we describe the superalgebras of derivations for all simple complex Lie superalgebras. Although the theorem below was stated in several papers, starting with \cite{Ka}, we could not find a detailed proof in the literature. For this reason we include it here.
\begin{theorem}
\label{thm:derivations}
Let $L$ be a simple finite-dimensional Lie superalgebra over  $\C$.
\begin{enumerate}[(a)]
\item If $L$ is one of the Lie superalgebras
  $$\sl(m|n), \text{ with \ } m\ne n, \ \ \osp(m|2n), \ F(4), \ G(3), \ W(n), \text{  or \ } \tilde S(n),$$
  then $\oder{L}=0$ and so $\der{L}\simeq L$.
\item If $L$ is one of the superalgebras
 $$\psl(n|n), \text{ with \ } n\ge 2, \ \ \spe(n), \text{ \ with \ } n\ge 3, \text {\ or \ } S(n), \ \text{ \ with \ } n\ge 2,$$ then the space $\oder{L}$ is one-dimensional and is generated by the Euler derivation.
\item If $L=\psq(n)$, with $n\ge 3$, then the superalgebra $\oder{L}$,  has dimension  $(0|1)$ and is generated by the odd derivation sending $L_\one$ to $L_\zer$ and $L_\zer$ to $0$.
\item If $L=SH(n)$ with $n\ge 5$, then $\der{L}=\C  \opluslhrim  H(n)$.
\item If $L=\psl(2|2)$, then
  $$\der{L}\simeq D(2,1,-1)\simeq\sl(2) \opluslhrim  \psl(2|2)$$ and so $\oder{\psl(2|2)}\simeq \sl(2)$.
\end{enumerate}
\end{theorem}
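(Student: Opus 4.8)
The plan is to compute $\oder{L}=H^1(L,L)$ by exploiting the action of a maximal torus $\mathfrak{h}$ of $L_\zer$ on $\der{L}$. The basic reduction rests on the identity $[\ad_h,D]=-\ad_{D(h)}$ for $h\in\mathfrak{h}$ and $D\in\der{L}$: decomposing $D$ into $\ad(\mathfrak{h})$-weight components $D=\sum_\mu D_\mu$, each piece of nonzero weight satisfies $\mu(h)\,D_\mu=-\ad_{D_\mu(h)}$ and is therefore inner, so every class in $\oder{L}$ is represented by an $\mathfrak{h}$-invariant derivation. Such a $D$ kills $\mathfrak{h}$ (as $L$ is centerless) and preserves each root space; since $\ad(\mathfrak{h})$ commutes with the Euler derivation $\cE$, I may further decompose $D$ by $\Z$-degree. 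First I would establish this two-step reduction once and for all, turning the problem into a computation with $\mathfrak{h}$-invariant degree-zero derivations together with a finite check of nonzero-degree derivations between graded pieces.

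For the consistently $\Z$-graded basic classical superalgebras of length three, an $\mathfrak{h}$-invariant degree-zero derivation acts on each one-dimensional root space $L_\alpha$ by a scalar $c_\alpha$ with $c_{\alpha+\beta}=c_\alpha+c_\beta$ whenever $[L_\alpha,L_\beta]\ne0$, so the even degree-zero outer derivations form the cokernel of $\mathfrak{h}\to\Hom_\Z(Q,\C)$, $t\mapsto(\alpha\mapsto\alpha(t))$, with $Q$ the root lattice; the only other possible outer derivations have degree $\pm2$ and are governed by $\Hom_{L_0}(L_{-1},L_1)$. For $\sl(m|n)$ ($m\ne n$) and $\osp(2|2n)$ the one-dimensional center of $L_0$ realizes the grading -- so $\cE$ is inner and the degree-zero cokernel vanishes -- and simultaneously separates $L_1$ from $L_{-1}$, killing the degree-$\pm2$ terms; hence $\oder{L}=0$, part (a). For $\psl(n|n)$ ($n\ge3$) and $\spe(n)$ the even part $L_0$ is semisimple, so $\cE$ is not realized by $\mathfrak{h}$ and a rank count gives degree-zero cokernel $\C\cE$, while the degree-$\pm2$ spaces vanish because $L_1\not\cong L_{-1}$ as $L_0$-modules; hence $\oder{L}=\C\cE$, part (b). The decisive module fact, which I would isolate, is $V_n\otimes V_n^*\not\cong V_n^*\otimes V_n$ as $\sl(n)\oplus\sl(n)$-modules for $n\ge3$, equivalently $V_n\not\cong V_n^*$; its failure at $n=2$ is the source of the exceptional case (e).

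For the algebras with \emph{semisimple} $L_\zer$ -- $\osp(m|2n)$ with $m\ge3$, $F(4)$, $G(3)$, $D(2,1;a)$ and $\psq(n)$ -- I would argue directly. An even derivation restricts to an inner derivation of $L_\zer$ (all derivations of a semisimple Lie algebra are inner); subtracting it leaves an $L_\zer$-endomorphism of the irreducible module $L_\one$, a scalar by Schur, which the Leibniz rule on $[L_\one,L_\one]\ne0$ forces to be zero, so $\oder{L}_\zer=0$. An odd derivation restricts to a $1$-cocycle of $L_\zer$ with values in $L_\one$, a coboundary by Whitehead's lemma $H^1(L_\zer,L_\one)=0$; after subtracting the corresponding inner odd derivation, what remains lies in $\Hom_{L_\zer}(L_\one,L_\zer)$. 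By Schur this vanishes unless $L_\one\cong L_\zer$ as $L_\zer$-modules -- which fails for $\osp,F(4),G(3),D(2,1;a)$ (giving part (a)) but holds for $\psq(n)$, where $L_\one\cong L_\zer\cong\sl(n)$ makes the Hom space one-dimensional and produces exactly the odd derivation of part (c); I would then verify the residual odd-odd Leibniz identity to confirm it is a genuine non-inner derivation.

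The remaining, and hardest, cases are the Cartan-type superalgebras $W(n)$, $\tilde S(n)$, $S(n)$ and $SH(n)$, where root spaces are no longer one-dimensional and the additivity argument breaks down; here I expect the main obstacle. The plan is to use that the canonical filtration is intrinsic to the bracket, hence preserved by every derivation, pass to the associated graded, and identify the graded derivation algebra with the Cartan prolongation of $(L_{-1},L_0)$: $W(n)$ is its own full prolongation, giving $\der{W(n)}=W(n)$; for $S(n)$ and $SH(n)$ the normalizer in $W(n)$ contributes only $\cE$ (part (b)) and, for $SH(n)$, also the codimension-one extension $H(n)\supset SH(n)=[H(n),H(n)]$ (part (d)), while the deformation defining $\tilde S(n)$ absorbs $\cE$ and leaves no outer derivations (part (a)). Finally, $\psl(2|2)$ is exceptional precisely because $V_2\cong V_2^*$: now $\Hom_{L_0}(L_{-1},L_1)$ and $\Hom_{L_0}(L_1,L_{-1})$ are one-dimensional, so besides $\cE$ there appear degree $+2$ and degree $-2$ outer derivations; computing their brackets shows they span a copy of $\sl(2)$, and matching the resulting $\sl(2)\oplus\sl(2)\oplus\sl(2)$-action on $L_1\oplus L_{-1}$ with the defining module of $D(2,1;-1)$ gives $\der{\psl(2|2)}\cong\sl(2)\ltimes\psl(2|2)\cong D(2,1;-1)$, establishing part (e).
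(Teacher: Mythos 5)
Your plan is essentially correct and covers all cases of the theorem, but it reaches the result by a genuinely different route from the paper. The paper's engine is Whitehead's lemma applied to a Levi subalgebra: using $H^1(L';L)=0$ it replaces any derivation by one annihilating the reductive part $L_0$ (Lemma~\ref{reduction}), and then, for every $\Z$-graded case including $W(n)$ and $S(n)$, runs a single induction up the grading based on the hypothesis $\Hom_{L_0}(L_{-1},L)=\C J$ (Lemma~\ref{graded}): the normalized derivation acts by a scalar on $L_{-1}$, one subtracts a multiple of $\cE$, and the commutation with $\ad L_{-1}$ kills everything. Your reduction instead uses the torus: split $D$ into $\ad\mathfrak{h}$-weight components via $[\ad_h,D]=-\ad_{D(h)}$, discard the nonzero weights as inner, and then do a root-additivity/lattice rank count. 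The two reductions are equally valid; yours makes the answer for the basic classical algebras very transparent (the cokernel of $\mathfrak{h}\to\Hom_\Z(Q,\C)$ is visibly $0$ for $\sl(m|n)$, $m\ne n$, and $\C\cE$ for $\psl(n|n)$, $\spe(n)$), whereas the paper's Lemma~\ref{graded} buys a uniform treatment of $W(n)$ and $S(n)$ with no extra machinery. For the Cartan-type algebras you are forced out of the root picture (root spaces have multiplicities, as you note) and invoke filtration-invariance plus Cartan prolongation, which is standard but heavier than the paper's elementary argument. Your treatment of the semisimple-$L_\zer$ cases ($\osp(m|2n)$ with $m\ge 3$, the exceptional algebras, $\psq(n)$) coincides with the paper's Corollary to Lemma~\ref{reduction}: Whitehead plus Schur plus $\Hom_{L_\zer}(L_\one,L_\zer)$.

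Three spots in your plan need to be filled in to make it a proof. First, the additivity argument tacitly assumes one-dimensional root spaces and that a bracket-additive function on roots is determined by its values on simple roots; the first assumption fails exactly for $\psl(2|2)$ (odd root spaces are two-dimensional there), so your degree-zero analysis in case (e) must switch to Schur's lemma on $\End_{L_0}(L_{\pm 1})$, and the propagation claim should be checked case by case. Second, you should say a word about degree $\pm 1$ (odd) components for the graded algebras; in your framework this is immediate because odd and even roots are disjoint, but it is not stated. Third, "the deformation defining $\tilde S(n)$ absorbs $\cE$" is only a gesture: the actual content, which the paper supplies, is that the deformed bracket $[L_{-1},L_{-1}]\subset L_{2n-2}$ imposes the extra relation $2\lambda_{-1}=\lambda_{2n-2}$ on the eigenvalues $\lambda_i$ of an invariant derivation, and together with additivity $\lambda_i+\lambda_j=\lambda_{i+j}$ this forces all $\lambda_i=0$; similarly, for $SH(n)$ the two-dimensionality of $\Hom_{L_0}(L_{-1},L)$ produces exactly the two extra derivations $\cE$ and $\ad_{L_\omega}$, $\omega=\xi_1\cdots\xi_n$. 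These are computations your prolongation/normalizer language points at but does not perform.
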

\begin{proof}
  The proof will be presented in the sequence of Lemmas~\ref{center}---\ref{sl(2)}.

\begin{lemma}\label{center} Let $h\in L_\zer$ be a semisimple element which does not belong to the center of $ L_\zer$ and let  $D\in\der{L}$. If $D(h)=\lambda h$ then $\lambda=0$.
\end{lemma}

  \begin{proof} Consider the one-parameter subgroup $\exp (tD)$ in $\Aut(L)$. Since every automorphism preserves the spectrum
of $\ad_h$ the statement follows.
  \end{proof}
  \begin{lemma}\label{reduction}
Assume that a Levi subalgebra $L'\subset L_\zer$ has at most one-dimensional centralizer  $C(L')$ in $L$ and $C(L')$ is generated by a semisimple element.
Then
\begin{enumerate}[(i)]
\item $L_0=C(L')\oplus L'$ is a reductive subalgebra which acts semisimply on $L$.
  \item For every  $D\in\der L$ there exists
    $D'\equiv D\mod \ad{L}$ such that
    $D'(L_0)=0$.
  \item There exists an embedding of Lie superalgebras
    $\oder{L}\subset {\End}_{{L_0}}(L/L_0)/C(L')$.
    \end{enumerate}
\end{lemma}
\begin{proof} (i) is obvious.

  (ii) The restriction of $D$ to $L'$ is a cocycle in the complex computing $H^1(L';L)$. Since $L'$ is semisimple we have
  $H^1(L';L)$ and hence $D$ is a coboundary. So we can choose $D'$ such that $D'(L')=0$. This implies $D'(C(L'))\subset C(L') $. By our assumptions
  $C(L)=\F z$ for some semisimple $z\in L_\zer$.
By Lemma \ref{center} we have $D'(z)=0$ and therefore
$D'(L_0)=0$.

(iii) Let
$$\operatorname{Der}^{L_0}(L)=\{D\in \der L\mid D(L_0)=0\}.$$
For every $D\in \operatorname{Der}^{L_0}(L)$ and $y\in L_0$, $x\in L$ we have
$$D([y,x])=[y,D(x)].$$
Write $L=L_0\oplus M$ where $M$ is an $L_0$-submodule of $L$. The above identity implies that  $\operatorname{Der}^{L_0}(L)$ is a Lie subalgebra of ${\End}_{{L_0}}(M)$.
That induces the embedding
$$\oder{L}\subset {\End}_{{L_0}}(M)/(\ad L\cap \operatorname{Der}^{L_0}(L)).$$
Finally, the statement follows from the identity
$$\ad L\cap \operatorname{Der}^{L_0}(L)=C(L').$$
\end{proof}
\begin{corollary} Let $L$ be a simple Lie superalgebra such that $L_\zer$ is semisimple and $L_\one$ is a simple $L_\zer$-module.
  Then all even derivations of $L$ are inner. If
  $$\Hom_{L_\zer}(L_\one,L_\zer)=0$$
  then all derivations of $L$ are inner.
\end{corollary}
The above corollary implies Theorem~\ref{thm:derivations} for  $\osp(m|2n)$, $m\neq 2$, all exceptional superalgebras and $\mathfrak{psq}(n)$. In the latter case
$L_\one\simeq L_\zer$ as a $L_\zer$-module which gives an odd derivation.

\begin{lemma}\label{graded}
  Suppose that $L$ is $\Z$-graded with a grading
  $$L=\bigoplus_{i=-1}^nL_i$$
  such that
  \begin{enumerate}[(i)]
  \item the grading is compatible with the  parity, i.e.
    $$L_\zer=\bigoplus_{i\equiv \zer\!\!\pmod{2}} L_i \text{ \ and \ } L_\one=\bigoplus_{i\equiv \one\!\!\pmod{2}} L_i~;$$
    \item $L_0$ is a reductive Lie algebra satisfying the condition of Lemma~\ref{reduction};
    \item  $L_{-1}$ is a simple $L_0$-module;
      \item $\Hom_{L_0}(L_{-1},L)=\C J $, where $J$ is the inclusion $L_{-1}\hookrightarrow L$.
    \end{enumerate}
Then
\begin{enumerate}[(a)]
\item  If $L'=L_0$, then $\oder{L}=\C \cE$,
  where $\cE$   is the Euler derivation from Example~\ref{ex:euler}.
\item If $L'\neq L_0$, then all derivations of $L$ are inner.
\end{enumerate}
\end{lemma}
\begin{proof} Observe that $\cE$ is a derivation of $L$. Furthermore, since $L$ is simple we have $[L_{-1},x]=0$ if and only if $x\in L_{-1}$.
  Using Lemma \ref{reduction} it suffices to describe derivations $D$ such that $D(L_0)=0$.

Schur's lemma and the condition
  $\Hom_{L_0}(L_{-1},L)=\C $ imply that $D$ acts on $L_{-1}$ as a scalar operator $\lambda$. Let
  $$D':=D+\lambda\cE.$$
  We will prove that $D'=0$ by showing by induction on $i$ that $D'(L_i)=0$ for all $i$.

  The statement is trivial for $i=-1$.
  Let $x\in L_i$, $u\in L_{-1}$ then
  $$0=D'([u,x])=(-1)^{|D'|}[u,D'(x)].$$
  Therefore $D'(x)\in L_{-1}$, and by the condition $\Hom_{L_0}(L_{-1},L)=\C $, we get $D'(x)=0$. That implies (a), since $\cE\notin \ad L_0$.

  Now assume that $L'\neq L_0$. In this case $L_0$ has a non-trival center $\frak{z}$. But if  $z\in \frak{z}$, then $\ad_z$ acts by a scalar on $L_{-1}$ and annihilates $L_0$. Hence $\ad_z=\cE$ which proves (b).
\end{proof}
Note that Lemma \ref{graded} implies the theorem for all simple superalgebras such that $L_\one$ is not a simple $L_\zer$-module, except the three cases: $\tilde{S}(2n)$, $SH(n)$ and $\psl(2|2)$.
\begin{lemma}\label{Cartan}\ \
  \begin{itemize}
  \item{(a)} All derivations $\tilde{S}(2n)$ for $n\ge 2$ are inner.
  \item{(b)} $\der{SH(n)}=\C \cE \opluslhrim  H(n)$ for $n\ge 5$.
    \end{itemize}
\end{lemma}
\begin{proof} Let $L=\tilde S(2n)$. In this case we have a decomposition
$$
  L=L_{-1}\oplus\bigoplus_{i=0}^{2n-2} L,
$$
which satisfies $$[L_i,L_j]\subset L_{i+j} \text{ for } i\ge -1, j\ge 0,$$
while $$[L_{-1},L_{-1}]\subset L_{2n-2}.$$
Furthermore, the subalgebra $\ds \bigoplus_{i=0}^{2n-2} L$ is isomorphic to the similar subalgebra in $S(2n)$, in particular, $L_0=\sl(2n)$.
Each $L_i$ is a simple $L_0$-module and $L_i$ is not isomorphic to $L_j$ for $i\neq j$.
Therefore, if $D$ is a derivation such that $D(L_0)=0$, then there exist $$\lambda_{-1},\lambda_0,\lambda_1,\dots\lambda_{2n-2}\in\C$$
such that $\lambda_0=0$ and $D(x)=\lambda_ix$ for all $x\in L_i$. One can easily see that
$$
\lambda_i+\lambda_j=\lambda_{i+j}, \text{ for } i\ge -1,j\ge 0, \text{ and } 2\lambda_{-1}=\lambda_{2n-2}.
$$
Hence $\lambda_i=0$ for all $i$ which implies $D=0$. This proves that all derivations of $\tilde S(2n)$ are inner.

Now let $L=SH(n)$. In this case $L$ has a $\Z$-grading satisfying the conditions (1)---(3) of Lemma~\ref{graded}, with $L'=L_0\simeq\so(n)$ and each $L_i$ isomorphic to
$\Lambda^{i+2}(V)$, where $V$ is the standard representation of $L_0$. In particular, we have $$\Hom_{L_0}(L_{-1},L)=\C ^2.$$
Consider
 $D\in\der L$ such that $D(L_0)=0$. Then $D(L_{-1})\in L_{-1}\oplus L_{n-3}$. More precisely,
 $$
 D(\partial_i)=a\partial_i+b[L_\omega,\partial_i],\quad L_\omega =
 \sum_{i=1}^n \partial_i(\omega)\partial_i,\quad \omega=\xi_1\dots\xi_n,
 $$
 for some $a,b\in \C $.
 Let $D':=D+a\cE-b\ad_{L_\omega}$. Then $D'(L_{-1}\oplus L_{0})=0$ and, therefore, by the same argument as in the proof of Lemma~\ref{graded}, we obtain that
 $D'=0$. This proves that  $\der{SH(n)}=\C \cE \opluslhrim H(n)$.
\end{proof}

\begin{lemma}\label{sl(2)} For $L=\psl(2|2)$ we have $\der{L}\simeq D(2,1;-1)$.
\end{lemma}
\begin{proof} We first note that $L$ is an ideal in $D(2,1;-1)$ with
  $$\sl(2)\simeq D(2,1;-1)/\psl(2|2).$$
  On the other hand, using Lemma \ref{reduction}, we see that $\der{L}/L$ is a Lie subalgebra in $$\End_{L'}(L_{\one})=M_2(\C ).$$
 Since $\sl(2)\subset\der{L}/L$, it remains to show that $\der{L}/L$ does not contain a non-zero scalar matrix, which is straightforward.
  \end{proof}

  This concludes the proof of Theorem~\ref{thm:derivations}.
  \end{proof}
\section{Almost inner derivations of simple Lie superalgebras}
\label{sec:main}

\begin{theorem}
  \label{thm:main}
Every \ai derivation of a simple Lie superalgebra is inner.
\end{theorem}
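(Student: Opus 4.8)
The plan is to combine the complete description of derivation superalgebras in Theorem~\ref{thm:derivations} with the elementary observation that being \ai is insensitive to inner derivations. Indeed, if $D'=D+\ad_a$ (with $a$ of the parity of $D$), then for homogeneous $x$ we have $D'(x)=D(x)+[a,x]$ and $[a,x]\in[L,x]$, so $D'(x)\in[L,x]$ if and only if $D(x)\in[L,x]$. Hence being \ai is a well-defined property of the class of $D$ in $\oder{L}=\der{L}/\ider{L}$, and since inner derivations are \ai, the image of $\ader{L}$ in $\oder{L}$ is a well-defined subspace equal to $\ader{L}/\ider{L}$. The theorem is thus equivalent to the vanishing of this image, i.e.\ to the statement that no nonzero outer class is represented by an \ai derivation. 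For the superalgebras of part (a) of Theorem~\ref{thm:derivations} one has $\oder{L}=0$ and there is nothing to prove, so only parts (b)--(e) remain, and for each of them $\oder{L}$ is spanned by the few explicit outer derivations listed there.

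For the $\Z$-graded cases $\psl(n|n)$, $\spe(n)$ and $S(n)$ of part (b) the single outer class is that of the Euler derivation $\cE$, so it suffices to show $\cE$ is not \ai. A degree count simplifies the condition: writing $a=\sum_j a_j$ with $a_j\in L_j$, the identity $[a,x]=\sum_j[a_j,x]$ with $[a_j,x]\in L_{i+j}$ shows that for $x\in L_i$ the requirement $\cE(x)=ix\in[L,x]$ is equivalent to $ix\in[L_0,x]=L_0\cdot x$. Thus, for $i\neq0$, $\cE$ is \ai precisely when every homogeneous $x\in L_i$ lies in $L_0\cdot x$ --- exactly the conical-orbit condition of Proposition~\ref{prop:prehom}. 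I would obstruct this with a homogeneous $L_0$-invariant polynomial $f$ of positive degree $d$ on $L_i$: by invariance its differential kills the tangent space $L_0\cdot x$, while Euler's identity gives $df_x(x)=d\,f(x)\neq0$ whenever $f(x)\neq0$, so $x\notin L_0\cdot x$. Concretely one takes $f=\det$ on $L_1=V_n\otimes V_n^*\cong\gl(n)$ for $\psl(n|n)$, $f=\det$ on $L_1=S^2V_n$ for $\spe(n)$, and $f(X)=\operatorname{tr}(X^2)$ on the adjoint module $L_1\cong\sl(n)$ for $S(n)$.

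The odd case $\psq(n)$ of part (c) needs the analogous but genuinely super argument. The outer generator $D_0$ is the isomorphism $L_\one\xrightarrow{\sim}L_\zer$ of adjoint modules, so for odd $B\in\sl(n)$ being \ai forces $D_0(B)\in[L_\one,B]$, i.e.\ $BB'+B'B\equiv B\pmod{\C I}$ for some $B'\in\sl(n)$. I would defeat this by choosing the diagonal $B$ whose eigenvalues $b_1,\dots,b_n$ are the $n$-th roots of a fixed nonzero scalar, so that $\sum b_i=\sum b_i^{-1}=0$. The off-diagonal equations $(b_i+b_j)B'_{ij}=0$ contribute nothing to $\operatorname{tr}(B')$, while the diagonal equations $2b_iB'_{ii}=b_i+\lambda$ give $\operatorname{tr}(B')=\tfrac12\bigl(n+\lambda\sum b_i^{-1}\bigr)=\tfrac n2\neq0$ independently of the scalar $\lambda$, contradicting $B'\in\sl(n)$. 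Hence $D_0$ is not \ai.

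Finally, parts (d) and (e). For $SH(n)$ the outer space is two-dimensional, spanned by $\cE$ and by $\ad_{L_\omega}$, where $L_\omega$ is the Hamiltonian field of the top form $\omega=\xi_1\cdots\xi_n$. In the grading $SH(n)=\bigoplus_{i=-1}^{n-3}L_i$ with $L_i\cong\Lambda^{i+2}V$ and $L_0\cong\so(n)$, the derivation $\ad_{L_\omega}$ sends $L_{-1}$ to the top piece $L_{n-3}$, which cannot occur in $[L,x]$ for $x\in L_{-1}$ (since $[L_j,L_{-1}]\subset L_{j-1}$ and $L_{n-2}=0$); this forces the coefficient of $\ad_{L_\omega}$ to vanish, leaving $c\,\cE$, which is killed by an anisotropic vector $v\in L_{-1}=V$: as $\so(n)\cdot v\subset v^\perp$ but $v\notin v^\perp$, we get $\cE(v)=-v\notin[L_0,v]$. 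For $\psl(2|2)$, where $\oder{L}\cong\sl(2)$ acts on $L_\one=V_2\otimes V_2\otimes V_2$ through the third factor, the image of $\ader{L}$ is an $\Aut(L)$-stable --- hence $\mathrm{SL}_2$-stable --- subspace of the adjoint module $\sl(2)$, so it is $0$ or all of $\sl(2)$; to exclude the latter it suffices to check one Cartan element $h$, for which the odd vector $x=M\otimes w$ with $w$ highest-weight and $\det M\neq0$ satisfies $h(x)=x$ while $x\notin(\sl(2)\oplus\sl(2))\cdot x$ by the $\det$-invariant on the first two factors. I expect the main obstacle to be precisely this non-uniformity of parts (c)--(e): the even Euler cases fall to one clean invariant-theoretic principle, but $\psq(n)$ requires transplanting that idea into the odd anticommutator setting and pinpointing the right special element, while $SH(n)$ and $\psl(2|2)$ have larger outer spaces whose several directions must each be excluded, using the grading to kill $\ad_{L_\omega}$ and the automorphism symmetry to collapse the $\sl(2)$ of outer derivations to a single Cartan element.
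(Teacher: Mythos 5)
Your proposal is correct and follows essentially the same route as the paper: reduce via Theorem~\ref{thm:derivations} to the finitely many outer classes, kill the Euler classes with a nonconstant invariant polynomial (the paper deduces its existence from the absence of an open $G_0$-orbit and concludes via $\C^*$-orbit closures rather than via the differential and Euler's identity, but it is the same invariant-theoretic obstruction), treat $\psq(n)$ by the same diagonal element with $\sum\lambda_i=\sum\lambda_i^{-1}=0$, and dispose of $SH(n)$ and $\psl(2|2)$ by the grading argument and the ideal/$\Aut(L)$-stability argument, respectively. If anything, your write-up is slightly more careful than the paper's at two points: you observe that the off-diagonal constraints in the $\psq(n)$ computation cannot affect the trace, so the paper's extra hypothesis $\lambda_i+\lambda_j\neq 0$ (which your roots-of-a-scalar choice actually violates when $n$ is even) is unnecessary; and you explicitly rule out mixed combinations $c_1\cE+c_2\ad_{L_\omega}$ for $SH(n)$, a case that the paper's two separate lemmas, as literally stated, do not cover.
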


\begin{proof} As follows from Theorem \ref{thm:derivations}, if not all derivations of $L$ are inner, then $L$ is one of the following superalgebras: $S(n), SH(n),\psq(n), \spe(n) $ or $\psl(n|n)$.

\

First we will deal with the outer derivations of the Euler type.
\begin{lemma}\label{grading-notinner}
 The Euler derivation $\cE$ in $S(n), SH(n),\spe(n) $ and $\psl(n|n)$ is not \ai.
  \end{lemma}
  \begin{proof} All these algebras admit a compatible $\Z$-grading as in Lemma \ref{graded} with
    a semisimple even part $L_0$.
    In each case, there exists $i>0$ such that the algebraic group $G_0$ corresponding to $L_0$ has no open orbit in $L_i$.
    This implies the existence of a non-constant homogeneous $G_0$-invariant polynomial
    $$f\in\C [L_i]=S^\bullet(L_i^*).$$
    Take $x\in L_i$ such that
    $f(x)\neq 0$. Assume that $\cE$ is almost inner. Then there exists $y\in L$ such that $[y,x]=ix$. Since $L$ is $\Z$-graded, we may assume that $y\in L_0$.
   But then the $\C^*$-orbit
   $$\cal{O}_x=\{\exp(ty)\cdot x\col t\in \C^*\}$$
    is contained in the one-dimensional subspace $\C x\subset L_i$.
    Since $x,y\ne 0$, the closure of $\cal{O}_x$ coincides with $\C x\subset L_i$ and so it must contain $0$.
    But $f$ is invariant under $G_0$, therefore
    $$f(x)=f(0)=0$$
    and we obtain a contradiction.
  \end{proof}
  \begin{lemma}\label{notinner2}
    Let  $$L=\bigoplus_{i=-1}^n L_i$$
be a $\Z$-graded Lie superalgebra and let
$D\in \der{L}$ be a derivation such that
$$0\ne D(L_{-1})\subset L_n.$$
Then $D$ is not \ai.
  \end{lemma}
  \begin{proof}
Assume that $D$ is \ai and  choose $x\in L_{-1}$ such that $D(x)\ne 0$.
Then $D(x)=[a,x]$ for some $a\in L$. Since $D(x)\in L_n$ and
$[L,L_{-1}]\cap L_n=0$, this gives a contradiction.
  \end{proof}

The above lemmas cover all simple Lie superalgebras with nontrivial outer derivations except $\psq(n)$ and $\psl(2|2)$.

\

  Let $L=\psq(n)$ with $n\ge 3$ and $D$ be the odd derivation such that $D(L_\zer)=0$ and $D:L_\one\to L_\zer$ is an isomorphism.
  We claim that $D$ is not \ai.

  Indeed, identify $L_1$ with the set of traceless $n\times n$ matrices and choose
  $$x=\mathrm{diag}(\lambda_1,\dots,\lambda_n)\in L_1$$
  to be a diagonal matrix with entries $\lambda_1,\dots,\lambda_n$ satisfying the conditions
  $$
  \sum\lambda_i=\sum\lambda_i^{-1}=0, \text{ \ and \ } \lambda_i+\lambda_j\ne 0, \text{ for all } i,j.
  $$
  We will prove that there is no $y\in L_\one$ such that $[y,x]=D(x)$. In the matrix form, such $y$ should satisfy
  the equation
  $$xy+yx=x+a I_n \text{ \ for some \ } a\in\C.
  $$
  Since all off-diagonal entries of $xy+yx$ are zero and
  $\lambda_i+\lambda_j\neq 0$, we know that $y$ is also diagonal. Moreover, the diagonal elements of $y$ are
  $\ds \mu_i=\frac{a}{\lambda_i}+\frac{1}{2}$.
  On the other hand, $\mathrm{tr}(y)=0$. But $\ds\sum\mu_i=\frac{n}{2}$. We obtain a contradiction.

\

  Finally, let us consider the case $L=\psl(2|2)$.
  Note that  $\der{L}=\sl(2) \opluslhrim  L$.
  Since \ai derivations form an ideal in $\der L$, it suffices to check
  that $\sl(2)$ contains a derivation $D\notin \ader L$.  One can take the Euler derivation $D=\cE$  and use Lemma~\ref{grading-notinner}.

  \end{proof}

  \section{Almost inner derivations of quasireductive superalgebras}
\label{sec:quasi}
Recall that a superalgebra $L$ is called \emph{quasireductive} if $L_0$ is a reductive Lie algebra and $L_\one$ is a semisimple $L_\zer$-module (see~\cite{Se}).
We assume that $\F$ is an algebraically closed field of characteristic $0$. By $G$ we denote a connected reductive algebraic group with Lie algebra $L_\zer$. Assume that $G$ acts on $L$ by adjoint representation.

  Denote by $\dero L$ the subalgebra of $\der L$ consisting of all $D$ such that $D(L_\zer)=0$. For any subalgebra $K\subset \der L$ denote by $K^\circ$ the intersection $\dero L\cap K$.
  \begin{lemma}\label{lem-reduction}  Let $L$ be quasireductive. Then
    $\ader L/\ider L$ is a subalgebra in \\ $\adero L/\idero L$.
  \end{lemma}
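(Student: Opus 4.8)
The plan is to produce a natural injective homomorphism of Lie superalgebras
$$\bar\Phi\colon \ader{L}/\ider{L}\longrightarrow \adero{L}/\idero{L},$$
whose existence is equivalent to the asserted inclusion. Its content is a \emph{reduction} statement: every homogeneous $D\in\ader{L}$ is congruent modulo $\ider{L}$ to a derivation $D^\circ\in\adero{L}$, i.e.\ one annihilating $L_\zer$. Granting this I would set $\bar\Phi([D])=[D^\circ]$ and verify the formal properties. Throughout, write $L_\zer=\mathfrak z\oplus L'$, where $L'=[L_\zer,L_\zer]$ is semisimple and $\mathfrak z$ is the (toral) center.

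To construct $D^\circ$, first restrict $D$ to $L'$. As in Lemma~\ref{reduction}, $D|_{L'}$ is a $1$-cocycle for $L'$ acting on $L$ via $\ad$, and Whitehead's lemma $H^1(L',L)=0$ yields a homogeneous $a\in L$ with $D|_{L'}=\ad_a|_{L'}$. Replacing $D$ by $D-\ad_a$, which still lies in the subalgebra $\ader{L}$, I may assume $D(L')=0$. A short computation using $[L',\mathfrak z]=0$ then shows $D(\mathfrak z)\subseteq C_L(L')$, so it remains to kill $D|_{\mathfrak z}$ by subtracting an inner derivation $\ad_c$ with $c\in C_L(L')$.

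This last step is where almost-innerness does the real work, and I expect it to be the main obstacle. In the \emph{even} case $D(\mathfrak z)\subseteq C_{L_\zer}(L')=\mathfrak z$; for $z\in\mathfrak z$ almost-innerness gives $D(z)=[b,z]$, whose even part $[b_\zer,z]$ vanishes since $z$ is central in $L_\zer$, while its odd part $[b_\one,z]$ lies in $L_\one$ and cannot equal the even element $D(z)$. Hence $D(z)=0$ and no correction is needed. In the \emph{odd} case $D(\mathfrak z)\subseteq C_{L_\one}(L')=:N$, and $D|_{\mathfrak z}$ is a $1$-cocycle for the torus $\mathfrak z$ acting semisimply on $N$; its class lies in $H^1(\mathfrak z,N)=\mathfrak z^*\otimes N^{\mathfrak z}=\mathfrak z^*\otimes C_{L_\one}(L_\zer)$ and is detected by the projection of $D(z)$ onto the zero-weight space $C_{L_\one}(L_\zer)$. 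Almost-innerness gives $D(z)=[b,z]=-z\cdot b_\one$ (the even part drops out as before), and since $\ad_z$ annihilates the zero-weight space, $z\cdot b_\one$ has no component in $C_{L_\one}(L_\zer)$. The obstruction class therefore vanishes, the cocycle is a coboundary, and there is $c\in C_{L_\one}(L')$ with $D|_{\mathfrak z}=\ad_c|_{\mathfrak z}$; then $D-\ad_c\in\adero{L}$ is the desired $D^\circ$.

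It remains to run the formal checks. The map $\bar\Phi$ is well defined because two reductions $D^\circ,\tilde D^\circ$ of the same $D$ differ by an element of $\ider{L}\cap\dero{L}=\idero{L}$, and it kills $\ider{L}$ since the reduction of an inner derivation lands in $\idero{L}$. It is a homomorphism because, writing $D_i=D_i^\circ+\ad_{a_i}$, the identity $[D,\ad_a]=\ad_{D(a)}$ shows $[D_1,D_2]\equiv[D_1^\circ,D_2^\circ]\pmod{\ider{L}}$, and $[D_1^\circ,D_2^\circ]\in\adero{L}$ because both $\ader{L}$ and $\dero{L}$ are subalgebras. Finally $\bar\Phi$ is injective: if $D^\circ\in\idero{L}$ then $D=D^\circ+\ad_{a}\in\ider{L}$, so $[D]=0$. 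This exhibits $\ader{L}/\ider{L}$ as a subalgebra of $\adero{L}/\idero{L}$.
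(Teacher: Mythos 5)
Your proof is correct and takes essentially the same route as the paper's proof of Lemma~\ref{lem-reduction}: split $L_\zer=\mathfrak z\oplus[L_\zer,L_\zer]$, use Whitehead's lemma to kill the derivation on the semisimple part modulo inner derivations, and then use almost-innerness together with the semisimple action of the center (an image of $\ad_z$ has no zero-weight component, equivalently $\ker\ad_z\cap\operatorname{im}\ad_z=0$) to dispose of the central part. The only differences are cosmetic: in the odd case you invoke almost-innerness before, rather than after, subtracting the coboundary coming from $H^1(\mathfrak z,\cdot)$, and you write out explicitly the formal verification that the induced map on quotients is a well-defined injective homomorphism, which the paper leaves implicit.
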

  \begin{proof} Let $D\in\ader L_\zer$. Then $D(z)=0$ for any central element $z\in L_\zer$. On the other, $[L_\zer,L_\zer]$ is semisimple and therefore all
    derivations of $[L_\zer,L_\zer]$ are inner. Therefore there exists $y\in L_\zer$ such that $(D-\ad_y)(L_\zer)=0$.

    Now let $D\in\ader L_\one$. Then $H^1([L_\zer,L_\zer],L_\one)=0$ and therefore we may choose $D$ such that $D([L_\zer,L_\zer])=0$.
    Thus, $D([x,y])=[x,D(y)]$ for any $y\in L$ and $x\in [L_\zer,L_\zer]$.
    Therefore if $Z$ is the center of $L_\zer$ then $D(Z)\subset L_\one^{[L_\zer,L_\zer]}$. On the other hand, we have $H^1(Z,M)=H^1(Z,M^Z)$ for any semisimple $Z$-module $M$. Thus,
    there exists $u\in L_\one^{[L_\zer,L_\zer]}$ such that $(D-\ad_u)(L_\zer)\subset L_\one^{L_\zer}$. But then $(D-\ad_u)(L_\zer)=0$ since $D-\ad_u$ is almost inner.
  \end{proof}
  \begin{corollary}\label{oddideal} If $L$ is quasireductive, then  $(\ader L/\ider L)_\one$ is an abelian ideal in  $\ader L/\ider L$.
  \end{corollary}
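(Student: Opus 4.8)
The plan is to reduce to derivations that kill $L_\zer$ and then observe that two odd such derivations compose to zero. By Lemma~\ref{lem-reduction}, the quotient $\ader L/\ider L$ is a subalgebra of $\adero L/\idero L$, so every class can be represented by a derivation $D\in\adero L$, i.e.\ one with $D(L_\zer)=0$; applying this to the odd part, every odd class has an odd representative $D\in\adero L$. Such an odd $D$ shifts parity and annihilates $L_\zer$, so $D(L_\zer)=0$ and $D(L_\one)\subset L_\zer$.

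Next I would compute the bracket of two odd representatives $D,D'\in\adero L$. Since $D'(L_\zer)=0$ and $D'(L_\one)\subset L_\zer=\ker D$, the composite $D\circ D'$ annihilates both $L_\zer$ and $L_\one$, so $D\circ D'=0$; by symmetry $D'\circ D=0$. Because $D$ and $D'$ are odd, their bracket in $\der L$ is the anticommutator $[D,D']=D\circ D'+D'\circ D$, which therefore vanishes. Passing to the quotient gives $[(\ader L/\ider L)_\one,\,(\ader L/\ider L)_\one]=0$, so the odd part is abelian.

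Finally, the ideal property comes for free: in any Lie superalgebra the bracket of an even element with an odd element is odd, while the bracket of two odd elements is even. Here the latter vanishes, so $[\ader L/\ider L,\,(\ader L/\ider L)_\one]\subset(\ader L/\ider L)_\one$, and hence $(\ader L/\ider L)_\one$ is an abelian ideal. The one point requiring care is that Lemma~\ref{lem-reduction} be invoked on the odd part, so that the chosen representatives are simultaneously odd and annihilate $L_\zer$; this is exactly what its proof supplies. The substantive work is all contained in that lemma: once odd representatives in $\dero L$ are available, the vanishing of the odd--odd composites is forced by the two-term parity grading, so I expect no genuine obstacle to remain beyond correctly applying the reduction.
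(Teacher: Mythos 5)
Your proof is correct and takes essentially the same approach as the paper, whose entire argument is that the corollary follows from Lemma~\ref{lem-reduction} because $[\dero L_\one,\dero L_\one]=0$; you simply make explicit the composition argument (odd representatives kill $L_\zer$ and send $L_\one$ into $L_\zer$, so their anticommutator vanishes) and the parity argument for the ideal property. One cosmetic slip: you write $L_\zer=\ker D$ where only the inclusion $L_\zer\subset\ker D$ is justified, and that inclusion is all the argument needs.
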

  \begin{proof} Follows from above Lemma since $[\dero L_\one,\dero L_\one]=0$.
  \end{proof}
  \begin{example}
    \label{ex-simple}
We revisit the example of Proposition  \ref{prop:prehom}. Let $L$ be a quasireductive superalgebra such that $[L_\one,L_\one]=0$ and $L_\one$ is a simple faithful  $L_\zer$-module. Then
    $$\dero L=\End_{L_\zer}(L_\one)\oplus\Hom_{L_\zer}(L_\one,L_\zer)=\F\mathcal E\oplus \Hom_{L_\zer}(L_\one,L_\zer).$$
    The second equality is a consequence of Schur's lemma.

    Note that $\adero L_\one=0$ since $[L_\one,L_\one]=0$.
    If $L_\zer$ has non-trivial center then $\mathcal E=\ad_z$ for some central element $z$.
    Assume now that $L_\zer$ is semisimple. Then $\mathcal E$ is almost inner if and only if all $G$-orbits in $L_\one$ are conical.
    In this case $\ader L/\ider L=\F\mathcal E$.
    \end{example}
  \begin{lemma}\label{lem-evenpart} If $L$ is quasireductive then $\adero L_\zer$ is an abelian Lie algebra which acts semisimply on $L$.
  \end{lemma}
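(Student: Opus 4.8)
The plan is to show that, for derivations killing the even part, the almost inner condition is so restrictive that it forces the derivation to be a block scalar on $L_\one$; abelianness and semisimplicity then follow for free.

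First I would unwind what $\adero L_\zer$ is. Write $\g=L_\zer$. An even $D\in\dero L_\zer$ annihilates $\g$, so for $X\in\g$ and $y\in L_\one$ the Leibniz rule gives $D(X\cdot y)=X\cdot D(y)$; thus $D$ restricts to a $\g$-module endomorphism of $L_\one$, by which it is completely determined. For homogeneous $x$ the almost inner condition reads $D(x)\in[L,x]$; on $L_\zer$ it holds trivially, and for $y\in L_\one$, since $D(y)\in L_\one$ while $[L_\one,y]\subseteq L_\zer$, it is equivalent to $D(y)\in[\g,y]=\g\cdot y$, i.e.\ $D(y)$ lies in the tangent space to the $G$-orbit of $y$.

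Next I would decompose. As $L_\one$ is a semisimple $\g$-module, write $L_\one=\bigoplus_i M_i\otimes U_i$ with the $U_i$ pairwise non-isomorphic irreducibles and the $M_i$ the multiplicity spaces, on which $\g$ acts trivially. By Schur's lemma $\End_\g(L_\one)=\prod_i\End(M_i)$, so $D=\bigoplus_i D_i\otimes\Id_{U_i}$ with $D_i\in\End(M_i)$. The key step is to test the tangency condition on decomposable vectors $y=m\otimes u$ (with $m\in M_i$, $u\in U_i$, $u\ne 0$): here $\g\cdot y=m\otimes(\g\cdot u)\subseteq m\otimes U_i$, while $D(y)=(D_i m)\otimes u$. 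For $(D_i m)\otimes u$ to lie in $m\otimes U_i$ one must have $D_i m\in\C m$. Since this holds for every $m\in M_i$, the operator $D_i$ has every vector as an eigenvector and is therefore scalar. Hence
\[
  \adero L_\zer \subseteq \bigoplus_i \C\,\Id_{M_i\otimes U_i},
\]
the algebra of operators acting as a scalar on each isotypic component and by $0$ on $L_\zer$.

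This containment finishes everything. The right-hand side is a commutative algebra of simultaneously diagonal (block-scalar) operators, so $\adero L_\zer$ is abelian and consists of pairwise commuting semisimple operators; consequently $L_\one$, and, extending by $0$, all of $L$, decomposes into joint eigenspaces, so that $\adero L_\zer$ acts semisimply. The only real content is the reduction of the almost inner condition to decomposable tensors, which is where the geometry ($D(y)$ tangent to orbits) is converted into the purely linear-algebraic statement that each $D_i$ is scalar; notably the skew/invariance relation coming from the bracket $L_\one\times L_\one\to L_\zer$ is not even needed here. I expect the only point requiring care is the bookkeeping with the isotypic decomposition, in particular checking that $\g\cdot(m\otimes u)$ stays inside $m\otimes U_i$, which is immediate since $\g$ acts only on the $U_i$-factor.
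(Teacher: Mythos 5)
Your proof is correct and takes essentially the same approach as the paper: there, the almost inner condition is used to show $D$ preserves every simple $L_\zer$-submodule of $L_\one$ (which is exactly your condition $D_i m\in\C m$, since the simple submodules of an isotypic component $M_i\otimes U_i$ are the subspaces $\C m\otimes U_i$), and Schur's lemma then gives that $D$ is a scalar on each isotypic component, whence abelianness and semisimplicity. Your write-up just makes explicit, via the multiplicity spaces, the two steps the paper leaves implicit (why $D$ preserves simple submodules, and why scalar on every simple submodule implies scalar on the whole isotypic block).
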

  \begin{proof} Let $D\in \adero L_\zer$. For any simple $L_\zer$-submodule $W\subset L_\one$ we have $D(W)\subset W$ and since $D$ commutes with the action of $L_\zer$,
    it acts on $W$ by a scalar by Schur's lemma.
    Consider the decomposition
    \begin{equation}\label{decomposition}
      L_\one=\bigoplus_{i=1}^k V_i^{m_i},
      \end{equation}
    where $V_1,\dots,V_k$ are pairwise non-isomoprhic simple $L_\zer$-modules. Then $D|_{V_i^{m_i}}=\lambda_i\operatorname{id}$ for some $\lambda_1,\dots\lambda_k\in\F$. This implies the statement.
  \end{proof}
  \begin{remark}\label{remark-conical} Note that  if $D\in \adero L_\zer$ acts by non-zero scalar on $V_i^{m_i}$ then all orbits of $G$ in $V_i^{m_i}$
    must be conical.
    \end{remark}
  \begin{proposition}\label{abelian} If $L$ is quasireductive then $\adero L$ and hence  $\ader L/\ider L$ is abelian.
  \end{proposition}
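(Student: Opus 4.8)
The plan is to use the decomposition $\adero L=\adero L_\zer\oplus\adero L_\one$ and to verify that the three types of brackets vanish. The even--even bracket vanishes because $\adero L_\zer$ is abelian by Lemma~\ref{lem-evenpart}. The odd--odd bracket vanishes for the reason already exploited in Corollary~\ref{oddideal}: every $D'\in\adero L_\one$ satisfies $D'(L_\zer)=0$ and $D'(L_\one)\subset L_\zer$, so for $D',D''\in\adero L_\one$ both composites $D'D''$ and $D''D'$ send $L_\zer\to 0$ and $L_\one\to L_\zer\to 0$, whence $[D',D'']=D'D''+D''D'=0$. Thus the entire content of the proposition is the vanishing of the even--odd bracket $[\adero L_\zer,\adero L_\one]$.

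To analyze it I would fix $D\in\adero L_\zer$ and $D'\in\adero L_\one$ and use the isotypic decomposition $L_\one=\bigoplus_i V_i^{m_i}$ from~\eqref{decomposition}. By Lemma~\ref{lem-evenpart} the operator $D$ acts on $V_i^{m_i}$ as a scalar $\lambda_i$ and annihilates $L_\zer$, while $D'$ annihilates $L_\zer$ and maps $L_\one$ into $L_\zer$. A direct computation then gives, for $v\in V_i^{m_i}$, that $[D,D'](v)=D(D'(v))-D'(D(v))=-\lambda_i\,D'(v)$, so that $[D,D']$ acts on $V_i^{m_i}$ as $-\lambda_i\,D'|_{V_i^{m_i}}$. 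Consequently the proposition reduces to the single claim that for every index $i$ with $\lambda_i\neq 0$ one has $D'|_{V_i^{m_i}}=0$.

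The crux is to prove exactly this claim, and it is where the hypotheses on $L$ enter. First, since $D'(L_\zer)=0$ and $D'$ is odd, the Leibniz rule shows that $D'\colon L_\one\to L_\zer$ is a homomorphism of $L_\zer$-modules; hence $D'|_{V_i^{m_i}}\neq 0$ forces $V_i$ to occur as a submodule of the adjoint module $L_\zer$. Second, if $\lambda_i\neq 0$ then, by Remark~\ref{remark-conical}, every $G$-orbit in $V_i^{m_i}$ is conical, which forces the algebra of $G$-invariant polynomials on $V_i^{m_i}$ to consist of constants only (a homogeneous invariant of positive degree is constant along each orbit and is rescaled by the $\C^*$-action preserving that orbit, hence vanishes). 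The main obstacle is to derive a contradiction from these two facts together with the almost-innerness of $D'$. I would grade everything by the characters of the abelian algebra $\adero L_\zer$: since the bracket $L_\one\otimes L_\one\to L_\zer$ is $\adero L_\zer$-equivariant and $L_\zer$ sits in character $0$, the relation $D'(x)=[b,x]$ (valid for homogeneous $x\in V_i^{m_i}$ by almost-innerness, with $b\in L_\one$) can receive contributions only from a partner component $V_j$ with $\lambda_j=-\lambda_i\neq 0$, whose orbits are therefore also conical. The remaining step is to show that the simultaneous presence of conical orbits on $V_i$ and $V_j$ and of the nonzero equivariant pairing $V_j\otimes V_i\to L_\zer$ hitting the copy of $V_i$ inside $L_\zer$ is incompatible with triviality of the invariants. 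I expect this to follow from a short case-free argument combining Schur's lemma with the conical condition, in the spirit of Proposition~\ref{prop:prehom} and Example~\ref{ex-simple}, but isolating it cleanly is the principal difficulty.

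Once the claim is established, $[\adero L_\zer,\adero L_\one]=0$ and hence $\adero L$ is abelian. The final assertion then follows formally: by Lemma~\ref{lem-reduction} the superalgebra $\ader L/\ider L$ embeds into $\adero L/\idero L$, which is a quotient of the abelian algebra $\adero L$ and is therefore abelian, so $\ader L/\ider L$ is abelian as well.
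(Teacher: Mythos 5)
You have the right skeleton and the right reduction: the even--even bracket vanishes by Lemma~\ref{lem-evenpart}, the odd--odd bracket vanishes as in Corollary~\ref{oddideal}, and the entire content is the even--odd bracket, which you correctly reduce to the claim that $\lambda_i\neq 0$ forces $D'|_{V_i^{m_i}}=0$. You even isolate the two relevant facts: (1) since $D'(L_\zer)=0$, the map $D'\colon L_\one\to L_\zer$ is an $L_\zer$-module homomorphism, so $D'|_{V_i^{m_i}}\neq 0$ forces $V_i$ to occur in the adjoint module $L_\zer$; and (2) if $\lambda_i\neq 0$, then by Remark~\ref{remark-conical} all $G$-orbits in $V_i^{m_i}$ are conical, hence $\C[V_i^{m_i}]^G=\C$. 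But exactly at this point your proof stops being a proof: you declare the derivation of a contradiction from (1) and (2) to be ``the principal difficulty'' and leave it open, and the route you sketch --- grading by characters of $\adero L_\zer$, a partner component $V_j$ with $\lambda_j=-\lambda_i$, the pairing $V_j\otimes V_i\to L_\zer$, and the almost-innerness of $D'$ --- is a detour: none of it is needed, and it is not brought to a conclusion.

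The missing observation, which is the whole of the paper's proof, is that (1) and (2) contradict each other outright. Since $L_\zer$ is reductive, an irreducible submodule of the adjoint module is either a trivial module (a line in the center) or a simple ideal $\mathfrak{k}\subset L_\zer$, and in neither case can all orbits be conical: for a trivial module the orbit of any $v\neq 0$ is the single point $\{v\}$, which is not $\C^*$-stable; for $V_i\simeq\mathfrak{k}$ the Killing form is a nonconstant homogeneous $G$-invariant on $V_i$ (equivalently, the adjoint orbit of a nonzero semisimple element is closed and avoids $0$, hence not conical). Composing with a $G$-equivariant projection $V_i^{m_i}\to V_i$ turns either invariant into a nonconstant homogeneous invariant on $V_i^{m_i}$, contradicting (2). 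So whenever $D'(V_i^{m_i})\neq 0$ you get $\lambda_i=0$, i.e.\ $[D,D']=0$; this one-line incompatibility (``$V_i$ occurs in $L_\zer$, hence not all its $G$-orbits are conical, hence the even derivation kills it'') is precisely the paper's argument. Your final deduction that $\ader L/\ider L$ is abelian via Lemma~\ref{lem-reduction} is fine as written.
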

  \begin{proof} We will prove that $\adero L$ is abelian. Let $D_0\in\adero L_\zer$ and $D_1\in\adero L_\one$. Consider the decomposition (\ref{decomposition}).
    Then $D_1(V_i)\neq 0$ if $V_i$ is isomorphic to an irreducible submodule of $L_\zer$. In other words, either $V_i$ is trivial or is isomorphic to some simple ideal of $L_\zer$. On the other hand, in this case not all $G$-orbits in $V_i$ are conical and hence $D_0(V_i)=0$ by Remark \ref{remark-conical}. Therefore $[D_0,D_1]=0$.
  \end{proof}
  \begin{conjecture} If $L$ is quasireductive then any almost inner derivation which is not inner is even.
  \end{conjecture}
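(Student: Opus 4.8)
The plan is to prove the equivalent statement that every odd almost inner derivation of a quasireductive $L$ is inner. This suffices because $\ader L/\ider L$ is abelian by Proposition~\ref{abelian}, so showing $(\ader L/\ider L)_\one=0$ forces every non-inner class to be even. First I would apply Lemma~\ref{lem-reduction} to reduce to the case $D\in\adero L_\one$: after subtracting an inner derivation we may assume $D(L_\zer)=0$. Then $D$ is an odd $L_\zer$-module homomorphism $D\colon L_\one\to L_\zer$ which automatically satisfies $D^2=0$ (since $D(L_\one)\subseteq L_\zer$ and $D(L_\zer)=0$), and the almost inner condition becomes the pointwise requirement
$$
D(x)\in[x,L_\one]\quad\text{for every homogeneous } x\in L_\one .
$$
The goal is to produce a single invariant odd element $u\in L_\one^{L_\zer}$ with $D=\ad_u$, since these $\ad_u$ are exactly the inner derivations lying in $\adero L_\one$.

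Next I would decompose $D$ by its target. Writing $L_\zer=Z\oplus\bigoplus_s\g_s$ with $Z$ the center and $\g_s$ the simple ideals, Schur's lemma splits $D=D^Z+\sum_s D^s$, where $D^Z$ is supported on the trivial isotypic component $L_\one^{L_\zer}$ and each $D^s$ on the isotypic component of $L_\one$ of adjoint type $\g_s$; on all other components $D$ vanishes. Here the conical-orbit obstruction of Remark~\ref{remark-conical} enters exactly as in the proof of Proposition~\ref{abelian}: the components supporting $D$ are of trivial or adjoint type, on which $G$ does not have all orbits conical, and these are therefore disjoint from the support of any even almost inner derivation. This is precisely why the even--odd interaction (for $D_0\in\adero L_\zer$ one computes $[D_0,D]=-\lambda_i D$ on the $i$-th isotypic piece, but $\lambda_i=0$ wherever $D\neq 0$) produces no contradiction, and the entire argument must be carried out inside the odd part.

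The core is then to show each summand is inner. For $D^Z$ this means realizing the linear map $D^Z\colon L_\one^{L_\zer}\to Z$ as $x\mapsto[x,u]$ for a fixed invariant $u$, using that the bracket restricts to a symmetric $Z$-valued form on $L_\one^{L_\zer}$ and that the almost inner condition places $D^Z(x)$ in its image pointwise. For $D^s$, fixing an invariant form $\kappa$ on $\g_s$ and writing the adjoint-type component as $\g_s\otimes U_s$, the identification $\Hom_{\g_s}(\g_s\otimes U_s,\g_s)\cong U_s^*$ encodes $D^s$ by a functional on the multiplicity space; one must match it against the $\g_s$-components of the brackets $[x,L_\one]$. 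The key technical input I expect to need is an explicit description of the symmetric $L_\zer$-equivariant bracket $L_\one\otimes L_\one\to L_\zer$ on these distinguished components, decomposing it into an antisymmetric form on the multiplicity space paired with $[\,\cdot\,,\cdot\,]$ and a symmetric form paired with the invariant symmetric map $\{\,\cdot\,,\cdot\,\}$ when the latter exists.

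The hard part will be the passage from the pointwise condition $D(x)\in[x,L_\one]$ to the existence of a \emph{single} invariant $u$ working simultaneously for all $x$. Unlike the even case, where Proposition~\ref{prop:ideal} let us invoke the automorphism group, no such shortcut is available for odd derivations, precisely because the Lie algebra of $\Aut(L)$ is only $\der L_\zer$ (as noted in the remark following Proposition~\ref{prop:ideal}). The difficulty is compounded by the cross-component brackets $[L_\one^{L_\zer},\g_s\otimes U_s]$ and $[\g_s\otimes U_s,\g_s\otimes U_s]$, which couple $D^Z$ and $D^s$ and therefore cannot be treated in isolation. My expectation is that once the equivariant bracket on the trivial and adjoint components is written out explicitly, the almost inner condition turns into a finite linear system whose solvability for $u$ follows from the nondegeneracy forced by simplicity of $L$; isolating and solving that system is the step I anticipate to be the genuine obstacle, and is likely why the statement is recorded here as a conjecture rather than a theorem.
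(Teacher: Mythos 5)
There is a genuine gap here: your text is a program, not a proof, and by your own admission the decisive step is missing. Your reductions are sound as far as they go --- subtracting an inner derivation to get $D\in\adero{L}_\one$ (this is Lemma~\ref{lem-reduction}), and observing via Schur's lemma that such a $D$ can only be supported on the isotypic components of $L_\one$ of trivial type (mapping to the center $Z$) and of adjoint type $\g_s$ (mapping to the simple ideals). But the entire content of the conjecture is precisely the step you defer: showing that the pointwise condition $D(x)\in[x,L_\one]$ forces $D$ to be $\ad_u$ for a single invariant $u$, or else fails outright on these distinguished components. No mechanism for this is supplied; the ``finite linear system whose solvability \ldots follows from the nondegeneracy forced by simplicity of $L$'' is not available, since a quasireductive $L$ is in general not simple --- that appeal is a non sequitur. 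So the proposal establishes nothing beyond the setup.

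For comparison: the paper does not prove this statement in general either --- it is recorded as a conjecture, and only the case of trivial center is proved (the proposition immediately following it). That partial proof takes a different route from yours: it invokes the structure theory of quasireductive superalgebras from~\cite{Se} (Lemma 5.6 and Theorem 6.9), writing the sum of minimal ideals as $M\oplus\bigoplus S_i\oplus\bigoplus T_j$ with $S_i$ simple and $T_j=\mathfrak{k}_j\otimes\F[\theta_j]$, and $L=R(L) \opluslhrim C(L)$. Then every $D\in\dero{L}_\one$ is an explicit linear combination of the derivations $\partial/\partial\theta_j$ and the odd outer derivations of the factors $S_i\simeq\psq(n)$; each of these was already shown not to be almost inner (the $\psq(n)$ case in the proof of Theorem~\ref{thm:main}), so an odd almost inner derivation must lie in $R(L)$ and hence be inner. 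Your isotypic decomposition is morally parallel (the adjoint-type components correspond to the $\psq(n)$ and $T_j$ factors), but the paper's argument succeeds exactly because, with trivial center, the candidate derivations can be listed and each excluded by a concrete computation --- the kind of concrete input your sketch acknowledges it lacks. If you want a provable statement rather than the open conjecture, the trivial-center case via this structure theory is the natural target.
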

  The proposition below confirms above conjecture for quasireductive superlagebras with trivial center.
  \begin{proposition} Assume that $L$ is quasireductive superalgebra with trivial center. Then $\ader L_\one=\ider L_\one$.
  \end{proposition}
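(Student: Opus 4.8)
The plan is to reduce to the derivations that annihilate $L_\zer$ and then use the vanishing of the center to rigidify an a priori pointwise‑inner odd derivation into a genuinely inner one. By Lemma~\ref{lem-reduction} the odd part of $\ader{L}/\ider{L}$ embeds into the odd part of $\adero{L}/\idero{L}$, so it suffices to show that every $D\in\adero{L}_\one$ is inner. Since $Z(L)=0$, the map $a\mapsto\ad_a$ is injective, so an odd inner derivation killing $L_\zer$ is exactly an invariant element $a\in L_\one^{L_\zer}$; thus the goal is to produce a single $a\in L_\one^{L_\zer}$ with $D=\ad_a$, which then automatically lies in $\idero{L}_\one$. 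For $D\in\adero{L}_\one$ one records that $D^2=0$ (because $D(L_\zer)=0$ and $D(L_\one)\subset L_\zer$) and that $D$ is an $L_\zer$-module map $L_\one\to L_\zer$ satisfying the skew relation $[D(x),y]+[D(y),x]=0$ for all $x,y\in L_\one$.

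Next I would use the isotypic decomposition~\eqref{decomposition}. By Schur's lemma $D$ carries each block $V_i^{m_i}$ into the $V_i$-isotypic part of the adjoint module $L_\zer$, hence $D$ vanishes on every $V_i$ not occurring in $L_\zer$. Exactly as in the proof of Proposition~\ref{abelian}, the surviving $V_i$ are of two kinds: either $V_i$ is trivial (mapped into the center of $L_\zer$) or $V_i$ is the adjoint module of a simple ideal $\g_j\subset L_\zer$. As a consistency check, pairing $D$ with an even $D_0\in\adero{L}_\zer$ and using that $\adero{L}$ is abelian (Proposition~\ref{abelian}) shows $\lambda_i\,D|_{V_i^{m_i}}=0$ whenever $D_0$ acts by a scalar $\lambda_i$ on $V_i^{m_i}$; by Remark~\ref{remark-conical} the blocks scaled nontrivially by an even almost inner derivation are exactly those with conical orbits, so $D$ is supported on the non‑conical adjoint and trivial blocks. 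On a block $V_i^{m_i}\cong\g_j\otimes U_i$ Schur's lemma forces $D(g\otimes u)=\phi_i(u)\,g$ for a functional $\phi_i\in U_i^*$, while $\ad_a(g\otimes u)=\psi_a^{(i)}(u)\,g$ for $a\in L_\one^{L_\zer}$; matching $D$ with an inner derivation amounts to realizing each $\phi_i$ by an invariant element.

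The decisive input is the almost inner condition, which I would apply at a generic point. For $x=h\otimes u$ with $h\in\g_j$ regular semisimple, almost innerness gives $y\in L_\one$ with $[y,x]=D(x)=\phi_i(u)\,h$, a vector proportional to $h$. Splitting $y$ into isotypic parts and using that $[\g_j,h]$ is transverse to the Cartan line $\C h$, the only contributions to $[y,x]$ proportional to $h$ come from the Cartan part of the bracket and from the invariant part of $y$, while the transverse directions must cancel. This is where $Z(L)=0$ enters: it prevents the "radial" value $\phi_i(u)\,h$ from being produced by a central summand of $L_\one$ of the sort that underlies the non‑inner examples such as Example~\ref{ex:odd}, so that it can only be matched by the inner derivation coming from an invariant $a$. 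Running this on each block (with the trivial block $L_\one^{L_\zer}\to Z(L_\zer)$ handled by the analogous argument through the bracket $L_\one^{L_\zer}\times L_\one\to L_\zer$) produces $\psi_a^{(i)}=\phi_i$ for all $i$, and assembling the blocks yields one invariant $a$ with $D=\ad_a$; uniqueness of $a$ again follows from $Z(L)=0$.

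I expect the main obstacle to be precisely this componentwise rigidity on the adjoint‑type blocks: upgrading the generic solvability $D(x)\in[L_\one,x]$ to a single invariant $a$, while controlling the coupling between the trivial isotypic component and the adjoint blocks through the bracket and through possible multiplicities among isomorphic simple ideals. A cleaner route worth attempting is to form the quasireductive extension $\hat L=L\oplus\C D$, with $D$ odd, $[D,D]=0$, $[D,L_\zer]=0$ and $[D,x]=D(x)$ for $x\in L_\one$ (this $\hat L$ is indeed quasireductive, as $\hat L_\zer=L_\zer$ and $\hat L_\one=L_\one\oplus\C D$ with $\C D$ a trivial summand). Then $D$ is inner exactly when $Z(\hat L)\ne 0$, and since $Z(\hat L)\cap L\subseteq Z(L)=0$, the problem reduces to showing that the almost inner condition forces a central element of $\hat L$ with nonzero $D$-component; the difficulty is converting the $x$-dependent centralizing elements $D-a_x$ supplied by almost innerness into one global central element.
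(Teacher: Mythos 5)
Your preliminary reductions are correct and match steps the paper also takes implicitly: the passage to $\adero L_\one$ via Lemma~\ref{lem-reduction}, the observation that an inner derivation killing $L_\zer$ must be $\ad_a$ with $a\in L_\one^{L_\zer}$, and the Schur-lemma analysis showing $D$ is supported on the trivial blocks and on the adjoint blocks of the decomposition~\eqref{decomposition}. The gap is in your ``decisive input,'' and it is not a matter of missing detail: the forcing mechanism you propose is false. You implicitly assume that on an adjoint block the self-bracket $L_\one\otimes L_\one\to L_\zer$ factors through the Lie bracket of $\g_j$, so that at a regular semisimple $h$ the only way to produce values proportional to $h$ is through the Cartan/invariant part of $y$. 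But an adjoint block of $\sl(n)$ can also carry the \emph{symmetric} invariant pairing (the Jordan product $y\otimes x\mapsto yx+xy$), for which this transversality fails. The concrete counterexample is $L=\psq(n)$, $n\ge 3$, which is itself quasireductive with trivial center, hence a case any proof of the proposition must handle: here $L_\one=\sl(n)$ is a single adjoint block of $L_\zer\cong\pgl(n)$, the bracket of odd elements is $[y,x]=yx+xy \bmod \C I_n$, the outer derivation is $D(x)=x\bmod \C I_n$, $D(L_\zer)=0$, and $L_\one^{L_\zer}=0$. For every regular semisimple traceless diagonal $x=\mathrm{diag}(\lambda_1,\dots,\lambda_n)$ with $s:=\sum\lambda_i^{-1}\neq 0$ (i.e.\ for \emph{generic} $x$), the traceless matrix
$$
y=\tfrac12 I_n-\tfrac{n}{2s}\,x^{-1}\in L_\one
\quad\text{satisfies}\quad
xy+yx=x-\tfrac{n}{s}I_n\equiv x \pmod{\C I_n},
$$
so $[y,x]=D(x)$ is solvable at all generic points by a wildly $x$-dependent, non-invariant $y$, even though $D$ is neither inner nor almost inner. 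The failure of almost-innerness occurs only on the special locus $\sum\lambda_i^{-1}=0$, $\lambda_i+\lambda_j\neq 0$ --- exactly the trace computation in the proof of Theorem~\ref{thm:main}. So an argument run at generic regular points cannot prove the proposition, and your claimed implication ``$[y,x]$ proportional to $h$ $\Rightarrow$ contribution from the invariant part of $y$'' is false at every regular point of a $\psq$-type block.

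For contrast, the paper's proof does no pointwise analysis on $L$ at all: it invokes the structure theory of quasireductive superalgebras from~\cite{Se} (which is where $Z(L)=0$ is really used) to write $L=R(L)\ltimes C(L)$ with the socle $C(L)$ a sum of simple ideals $S_i$ and Takiff ideals $T_j=\mathfrak{k}_j\otimes\F[\theta_j]$; it then computes $\dero(C(L))_\one$ from Theorem~\ref{thm:derivations} (among simple superalgebras only $\psq(n)$ contributes an odd outer derivation) together with $\dero(T_j)_\one=\F\,\partial/\partial\theta_j$, and concludes because none of these basis derivations is almost inner --- for $T_j$ since $[(T_j)_\one,(T_j)_\one]=0$, and for $\psq(n)$ by the special-point argument of Theorem~\ref{thm:main}. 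Any direct approach like yours would have to reproduce exactly that special-point obstruction, not a generic one. Your fallback route via $\hat L=L\oplus\C D$ stalls at the same place (you say so yourself: converting the $x$-dependent elements into one global central element), so it does not repair the gap; and your treatment of the trivial blocks, left as ``the analogous argument,'' is also a genuine hole, since that is where the hypothesis $Z(L)=0$ separates the proposition from counterexamples such as Example~\ref{ex:odd}.
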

  \begin{proof} We will use Lemma 5.6 and Theorem 6.9 of~\cite{Se}. Denote by $C(L)$ the sum of minimal ideals of $L$.  Lemma 5.6 implies that
    \begin{equation}\label{reductive}
      S(L)=M\oplus \bigoplus_{i=1}^k S_i\oplus \bigoplus_{j=1}^l T_j,
      \end{equation}
    where each $S_i$ is a simple Lie superalgebra and $T_j=\mathfrak{k}_j\otimes \F[\theta_j]$ with simple Lie algebra $\mathfrak{k}_j$ and odd $\theta_j$.
    Theorem 6.9 implies that $L=R(L) \opluslhrim C(L)$ where $R(L)$ is a a quasireductive Lie superalgebra with abelian $R(L)_\one$.
    Furthermore, $R(L)_\one$ is a subalgebra in $\operatorname{Der}^\circ(C(L))_\one$.  From Section 3 we know that $\operatorname{Der}^\circ(S_i)_\one=0$ unless
    $S_i$ is isomorphic to $\mathfrak{psq}(n)$ for $n\ge 3$ and
    $\operatorname{Der}^\circ(S_i)_\one$ is one-dimensional if $S_i\simeq\mathfrak{psq}(n)$. A simple computation shows that
    $$\operatorname{Der}^\circ(T_j)=\F\frac{\partial}{\partial{\theta_j}}.$$
    If $D\in\dero L_\one$ then $D$ is a linear combination of $\ds\frac{\partial}{\partial{\theta_j}}$ for $j=1,\dots l$ and
      $u_i\in\operatorname{Der}^\circ(S_i)_\one$ for $i$ such that $S_i\simeq \mathfrak{psl}(n)$. Not that none of $\theta_j$ or $u_i$ is an almost
      inner derivation of $C(L)$. Thus, if
      $$D=\sum c_j\theta_j+\sum d_iu_i$$ is an almost inner derivation of $L$ then $D\in R(L)$ and thus $D$ is inner.
    \end{proof}

    \begin{remark}\label{towclass}
  Observe that in the decomposition (\ref{reductive})  the almost inner derivations of $T_j$ and $S_i$ coincide with inner derivations. Thus,
      $$ \ader L/\ider L=\operatorname{aDer}(L')/\operatorname{iDer}(L') $$
where
$$
L'=L/\left( \bigoplus_{i=1}^k S_i\oplus \bigoplus_{j=1}^l T_j\right).
$$
      On the other hand, $L'=L'_\zer\oplus L_\one'$ with $[L_\one',L'_\one]=0$. In view of Example~\ref{ex-simple} and Remark~\ref{remark-conical} we can see
      that classification of almost inner derivations of centerless quasireductive superalgebras can be reduced to classification of
      representations of semisimple groups with conical orbits.
    \end{remark}
    \begin{proposition}\label{ext} Let $\mathcal G$ be a connected quasireductive supergroup with Lie superalgebra $L$. Assume that $L$ has trivial center.
      $\aaut {L}\subset \aut L$ be a supergoup with Lie superalgebra $\ader L$. Then $\aaut L$ is a maximal connected subgroup of $\aut L$ such that
      $$\operatorname{Ext}^\cdot_{\aaut{L}}(\F,\F)\simeq \operatorname{Ext}^\cdot_{\mathcal G}(\F,\F)$$ is an isomorphism of algebras.
    \end{proposition}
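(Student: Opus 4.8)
The plan is to translate the statement about $\operatorname{Ext}$-algebras into a statement about rings of invariant polynomials on the odd part, where the results of this section apply directly.

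First I would record the cohomological interpretation of $\operatorname{Ext}$. For a connected quasireductive supergroup $\mathcal H$ with Lie superalgebra $\mathfrak h=\mathfrak h_\zer\oplus\mathfrak h_\one$ and reductive even part $\mathcal H_\zer$, one has a natural algebra isomorphism $\operatorname{Ext}^\cdot_{\mathcal H}(\F,\F)\cong H^\cdot(\mathfrak h,\mathfrak h_\zer;\F)$ with relative Lie superalgebra cohomology. Since the odd part contributes symmetric rather than exterior generators and $[\mathfrak h_\one,\mathfrak h_\one]\subseteq\mathfrak h_\zer$, the relative Chevalley--Eilenberg differential vanishes identically, so $\operatorname{Ext}^\cdot_{\mathcal H}(\F,\F)\cong(\operatorname{Sym}\mathfrak h_\one^*)^{\mathfrak h_\zer}=\C[\mathfrak h_\one]^{\mathcal H_\zer}$, the ring of $\mathcal H_\zer$-invariant polynomials on $\mathfrak h_\one$ (graded with $\mathfrak h_\one^*$ in degree one). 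Specializing to $\mathcal G$, whose Lie superalgebra is $\ider L\cong L$, gives $\operatorname{Ext}^\cdot_{\mathcal G}(\F,\F)\cong\C[L_\one]^{G}$, and the inclusion $\mathcal G\hookrightarrow\aaut L$ induces the restriction map of invariant rings.

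Next I would compute the source. By the preceding proposition $\ader L_\one=\ider L_\one$, so $\aaut L$ and $\mathcal G$ have the same odd part $L_\one$; by Lemma~\ref{lem-evenpart} the even part is $\aaut L_\zer=G\cdot T$, where $T=\exp(\adero L_\zer)$ is a torus acting by scalars on the isotypic components of $L_\one$. Hence $\operatorname{Ext}^\cdot_{\aaut L}(\F,\F)\cong\C[L_\one]^{G\cdot T}$ and the restriction map is the inclusion $\C[L_\one]^{G\cdot T}\hookrightarrow\C[L_\one]^{G}$. To see it is an isomorphism I would use that the almost-inner condition forces $\Lie(T)=\adero L_\zer$ to be tangent to the $G$-orbits in $L_\one$: for even $D\in\adero L_\zer$ and $x\in L_\one$ we have $D(x)\in[L,x]=[L_\zer,x]=\g x=T_x(Gx)$. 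Therefore $T$ fixes every $G$-invariant function, so $\C[L_\one]^{G}\subseteq\C[L_\one]^{G\cdot T}$; the reverse inclusion is obvious, and the two invariant rings coincide. This is the infinitesimal form of the conicity used throughout the section: scaling a conical component moves points along $G$-orbits, so invariant functions cannot detect it.

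It remains to prove maximality, which I expect to be the main difficulty. I would establish the sharper statement that, for a connected $H$ with $\mathcal G\subseteq H\subseteq\aut L$, the restriction $\operatorname{Ext}^\cdot_H(\F,\F)\to\operatorname{Ext}^\cdot_{\mathcal G}(\F,\F)$ is an isomorphism if and only if $\Lie(H)\subseteq\ader L$, i.e.\ $H\subseteq\aaut L$; maximality of $\aaut L$ then follows at once. The ``if'' direction is covered by the computation above. For ``only if'', suppose $\Lie(H)\not\subseteq\ader L$ and pick a homogeneous $D\in\Lie(H)\setminus\ader L$. If $D$ is even, then after subtracting an inner derivation we may take $D\in\dero L_\zer$, and since $D$ is not almost inner it fails to be orbit-tangent at some $x\in L_\one$, so $\exp(tD)$ moves some element of $\C[L_\one]^{G}$; this shrinks $\C[L_\one]^{H_\zer}$ strictly below $\C[L_\one]^{G}$ and destroys surjectivity. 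The remaining and hardest case is when the only new directions are odd, so that $\Lie(H)_\one$ strictly contains $L_\one$; here I would compare the cohomological support varieties $\operatorname{Spec}\C[\Lie(H)_\one]^{H_\zer}$ and $\operatorname{Spec}\C[L_\one]^{G}$ (equivalently, the degree-one pieces $(\Lie(H)_\one^*)^{H_\zer}$ and $(L_\one^*)^{G}$), using $\ader L_\one=\ider L_\one$ to argue that enlarging the odd part alters this variety and so breaks the algebra isomorphism. Making this final step precise---ruling out that an extra odd direction could be absorbed into larger even-group orbits so as to leave the invariant ring unchanged---is the crux of the argument.
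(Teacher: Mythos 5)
Your reduction of both $\operatorname{Ext}$-algebras to rings of invariants is exactly the paper's approach: the paper's entire proof consists of citing the identity $\operatorname{Ext}^\cdot_{\mathcal G}(\F,\F)=H^\cdot(L,L_\zer;\F)=S^\cdot(L_\one^*)^G$ of \cite{BKN}, and your first two paragraphs (vanishing of the relative differential, $\ader{L}_\one=\ider{L}_\one$, and the fact that elements of $\adero{L}_\zer$ are tangent to $G$-orbits in $L_\one$ and hence annihilate $\C[L_\one]^G$) correctly supply the details behind that citation, in particular the part showing that $\aaut L$ itself has the stated $\operatorname{Ext}$-property. One small slip: $[L,x]\ne[L_\zer,x]$ in general; what you need, and what is true, is that for $x\in L_\one$ and $D$ even the element $D(x)$ is odd, so $D(x)\in[L,x]$ forces $D(x)\in[L_\zer,x]=\g x$.

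The genuine gap is in the maximality argument, and your even case already fails. You claim that if $D\in\dero{L}_\zer$ is not almost inner then $\exp(tD)$ moves some element of $\C[L_\one]^G$. This implication is false: invariant polynomials do not detect directions transverse to orbits. Take $L_\zer=\sl(2)$ and $L_\one=\C^2\otimes\C^2$ with $\sl(2)$ acting on the first factor only and $[L_\one,L_\one]=0$; write elements of $L_\one$ as pairs $(v_1,v_2)$ with $v_i\in\C^2$. This $L$ is quasireductive with trivial center, and $\ader L=\ider L$ (one checks $\adero L=0$ directly, then applies Lemma~\ref{lem-reduction}). The derivation given by $D(v_1,v_2)=(v_1,-v_2)$ and $D(L_\zer)=0$ is not almost inner: for $x=(v,v)$ one would need $\xi\in\sl(2)$ with $\xi v=v$ and $\xi v=-v$ simultaneously. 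Yet its flow $(v_1,v_2)\mapsto(e^tv_1,e^{-t}v_2)$ fixes $\det(v_1,v_2)$, which generates $\C[L_\one]^{\operatorname{SL}(2)}$. Consequently the connected subgroup $H\subset\aut L$ generated by $\aaut L$ and $\exp(tD)$ strictly contains $\aaut L$, is quasireductive, and the restriction map $\operatorname{Ext}^\cdot_H(\F,\F)=\C[L_\one]^{H_\zer}=\C[\det]\longrightarrow\operatorname{Ext}^\cdot_{\mathcal G}(\F,\F)=\C[\det]$ is the identity. So the ``only if'' half of your proposed sharper statement is false, your even case cannot be repaired by this method, and the odd case you leave open yourself. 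For calibration: the paper's one-line proof contains no maximality argument either, and the example above indicates that the maximality clause of the proposition needs a more careful formulation than the literal one; but in any case your proposal as written does not establish it.
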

    \begin{proof} Follows from the identity, \cite{BKN}
      $$ \operatorname{Ext}^\cdot_{\mathcal G}(\F,\F)=H^\cdot(L;L_\zer,\F)=S^\cdot(L_\one^*)^G.$$
      \end{proof}
    \section {More examples and open questions}
    \label{sec:final}
     Note that Example~\ref{ex:odd} can be modified to obtain an example of non-solvable Lie algebra with almost inner but not inner derivation.
      Let $L$ denote the following subalgebra of $\gl(2n+k)$ with $k<n$ consisting of block matrices
\[
 X= \left(     \begin{matrix}
      0 & B & c \\
      0 & 0 & d \\
      0 & 0 & C
    \end{matrix}   \right)
\]
where $C$ lies in some Lie subalgebra of $\gl(k)$, $B$ is an arbitrary traceless matrix, and $c,d$ are arbitrary $n\times k $ matrices.  Then $\ad_D$ for
\[
 D= \left(     \begin{matrix}
      0 & 1_n & 0 \\
      0 & 0 & 0 \\
      0 & 0 & 0
    \end{matrix}   \right)
\]
gives an almost inner derivation of $L$ which is not inner.

The group version of the above examples gives an example of an almost inner automorphism for groups.
If $\F$ is a finite field this gives an example of almost inner but not inner automorphism of a finite group.

We know that for a finite group $G$ and algebraically closed field $\F$ of characteristic $0$ the group $\aaut(G)$ is exactly the set of automorphisms $\varphi$ such that $V^\varphi\simeq V$ for any $\F[G]$-module $V$.

Note that if
$\mathcal G$ is a quasireductive supergroup then we can have an automorphism $\varphi$ which is not almost inner but still $V^\varphi\simeq V$ for any $\mathcal G$-module $V$. Indeed, consider $\mathcal G=\R^{0|1}$ and an
automorphism $\varphi$ acting by a scalar operator. It would be interesting to characterize almost inner automorphisms in terms of representations of $\mathcal G$.

\end{document}